\theoremstyle{plain}
\makeatletter\@namedef{subjclassname@2020}{\textup{2020} Mathematics Subject Classification}\makeatother
\newtheorem{Thm}{Theorem}[section]
\newtheorem{Lem}[Thm]{Lemma}
\newtheorem{Cor}[Thm]{Corollary}
\newtheorem{Pro}[Thm]{Proposition}
\theoremstyle{definition}
\newtheorem{Def}[Thm]{Definition}
\newtheorem{Przyk}[Thm]{Example}
\theoremstyle{remark}
\newtheorem{Rem}[Thm]{Remark}
\numberwithin{equation}{section}
\newcommand{\ITEE}[4][]{\ifthenelse{\equal{#2}{#3}}{#4}{#1}}
\newenvironment{abece}[1][\em]{\begin{enumerate}[#1\ (a)]}{\end{enumerate}}
\newenvironment{property}[1]{\begin{equation}%
\label{eq:#1}\begin{minipage}[c]{.85\textwidth}\itshape}{\end{minipage}\end{equation}\ignorespacesafterend}
\newcommand{\rest}[1]{{\raise-.3ex\hbox{\big|}}_{#1}} 
\newcommand{\ZP}{{\mathbb{Z}_+}}
\newcommand{\C}{\mathbb{C}}
\newcommand{\N}{\mathbb{N}}
\newcommand{\pImp}[2]{\par(\ifthenelse{\equal{#2}{#1}}%
{$\Rightarrow$}{#1)$\Rightarrow$(#2})}
\newcommand{\lImp}[2]{\par(\ifthenelse{\equal{#2}{#1}}%
{$\Leftarrow$}{#1)$\Leftarrow$(#2})}
\newcommand{\NWSR}{the following conditions are equivalent:}
\newcommand{\scp}[1]{\left\langle#1\right\rangle}
\newcommand{\dom}{\mathscr{D}}
\newcommand{\eld}[1]{\ell^2(#1)}
\newcommand{\Tree}{\mathcal{T}}\newcommand{\Troot}{\mathsf{root}}
\newcommand{\parf}{\mathsf{p}}\newcommand{\parq}{\mathsf{q}}\newcommand{\Des}{\mathsf{Des}}
\newcommand{\Chif}[1][]{\mathsf{Chi}\ifthenelse{\equal{#1}{}}{}{^{\langle #1\rangle}\!}}
\newcommand{\Chit}[2]{\mathsf{Chi}^{\langle #1\rangle}_{#2}}
\newcommand{\blambda}{{\boldsymbol\lambda}}
\begin{document}
\title[Weighted shifts on directed forests and hyponormality]
{Weighted shifts on directed forests\\and hyponormality} 
\author[P.\ Pikul]{Piotr Pikul}
\address{P. Pikul\\Instytut Matematyki\\
Wydzia\l{} Matematyki i~Informatyki\\Uniwersytet Jagiello\'{n}ski\\
ul.\ \L{}ojasiewicza 6\\30-348 Krak\'{o}w\\Poland}
\email{Piotr.Pikul@im.uj.edu.pl}
\keywords{Weighted shift, directed forest, Hilbert space,
bounded operator, power hyponormal}
\subjclass[2020]{Primary 47B37; Secondary 05C20, 47B20.}
\begin{abstract}
In a 2012 paper, Jab\l o\'nski, Jung and Stochel introduced the
weighted shifts on directed trees, a generalisation of the well-known
weighted shift operators on $\ell^2$. In the last decade this class
has proven itself handy for finding  counterexamples in operator
theory. Properties of the underlying graph structure had essential
influence on these operators.
It appears that a slight generalisation of the class, namely weighted
shifts on directed \emph{forests}, shows even deeper relations between
graph theory and operator theory. Several operations on directed
forests have their natural operator-theoretic counterparts.
This paper is meant to present advantages of the directed forest
approach. As an application of the interrelation between graphs and
operators we provide a full characterisation of directed forests on
which every hyponormal bounded weighted shift is power hyponormal.
\end{abstract}
\maketitle

\section{Introduction} 
The weighted shifts on $\ell^2$ are long known objects in operator
theory (see e.g.\ \cite{shields} for a survey).
We recall that a weighted shift operator $W$ on $\ell^2$ is determined
by the formula $We_n = a_n e_{n+1}$, where $\{ a_n \}_{n=0}^\infty$ is
a~sequence of complex weights\footnote{According to \cite[Corollary 1]{shields}
(cf.\ \cite[Theorem~3.7]{pikul1}), there is no loss of generality in
assuming that the weights are non-negative.} and $\{e_n\}_{n=0}^\infty$ is
the standard orthonormal basis of $\ell^2$.

In \cite{szifty} there was introduced a generalisation of this concept,
namely a weighted shift on a directed tree.
To state the idea briefly, dealing with trees we map (shift) an element of
the orthonormal basis onto more than one ``successor'' and multiply them by weights. 
The formula defining the action of such operator (in the bounded case) 
takes the form (cf.\ \cite[Proposition 3.3]{pikul1})
\begin{equation}\label{eq:naBazowym}
 S_\blambda e_v = \sum_{u\in\Chif(v)} \lambda_u e_u,
\end{equation}
where $\Chif(v)$ is the set of children of the vertex $v$.
This class of operators helped to solve several open problems
in operator theory (see, e.g., \cite{anand,budz2,exner,jabjs1,jabjs2}).

Several generalizations of weighted shifts have been studied in the literature, 
for example multivariable weighted shifts \cite{curto1k,back-ext-2D}.

The concept of weighted shifts on directed forests we are going to
focus on in this paper was introduced in \cite{pikul1}.
This seemingly minor change, allowing non-connected graphs,
appears to show even tighter connection between operator-theoretic
properties and graph structure. One of the advantages is
that the class of weighted shifts on directed forests
is closed on taking powers or orthogonal sums. Moreover,
these operations on operators have their counterparts in
the underlying graph structure.

Many properties and arguments are similar to those of directed
trees presented in \cite{szifty}.
Up to a unitary equivalence and a slight modification
of the underlying forest, all weights\footnote{Except for the weights
attached to the roots, which equal $0$ by definition.}
are strictly positive (\cite[Theorem 3.7]{pikul1}). 
Any weighted shift on a directed forest is a circular
operator (see \cite{circular}), and hence its spectrum
is circular (i.e.\ rotationally invariant).
In the case of a weighted shift on a directed tree, a zero weight
decomposes it to an orthogonal sum (see \cite[Proposition 3.1.6]{szifty}).

An even wider class of graphs and associated operators could
be studied. Research on this subject has a rich history
(see, e.g., \cite{mohar,muller,fujii,majdak,adj-ops}).

For directed forests several ``graph operations'' can be defined,
like taking powers (see \cite[Definition 2.12]{pikul1}) 
of disjoint unions. Also the relation of \emph{thickness}
(see Definition~\ref{dfn:thickness}) is a useful tool when dealing
with forests. In \cite{pikul1,pikul2} some additional operations
were defined, particularly for the study of backward extensions.

In the case of classical weighted shifts there is no difference
between hyponormality and power hyponormality.
It is well known that a wide class of composition operators on $L^2$
spaces shares this property (see \cite[Corollary 3]{campbell}).
However, as shown in \cite[Example]{campbell} there exist hyponormal
composition operators whose squares are not hyponormal (see
\cite{halmosPB,ito} for more information on squares of hyponormal
operators). 
As a final result we provide a full characterisation of all
directed forests on which bounded hyponormal weighted shifts are power hyponormal
(see Theorems~\ref{thm:only-stars} and \ref{thm:star-support}).
According to this characterisation, any directed forest whose
leafless support is non-forkless admits a hyponormal weighted shift
whose square is not hyponormal
(cf.\ \cite[Examples 5.3.2 and 5.3.3]{szifty}).
For the relationship to the Halmos problem on polynomial hyponormality,
we refer the reader to \cite{curto-putinar,curto-putinar2} (see also
\cite{coul-paulsen,curtoq} for the case of classical weighted shifts).

One may notice that passing from the previous result \cite[Theroem~4.8]{pikul1}
to the complete characterisation (Theorem~\ref{thm:star-support})
is essentially done by studying relations between directed forests.
This is a good summary of how strong is the bond between graph structure 
and properties of operators. 

\section{Weighted shifts on directed forests}\label{R:lasy}
We denote the sets of non-negative integers, positive integers,
and complex numbers by $\ZP$, $\N$ 
 and $\C$, respectively.
If $f\colon A\to A$ is any function, then $f^0=\mathrm{id}_A$
(the identity function on $A$).

This section recalls the foundations of weighted shifts on directed forests.
introduced in \cite{pikul1} (cf.\ also \cite{pikul2}).
Before we start discussing weighted shift operators we
take a look on the directed graphs lying underneath.

The first thing worth noting is that we do not follow the traditional
formal framework of graph theory. In fact the possibility of
using much more concise definitions and notation is one of the advantages
of directed forests over trees.

\begin{Def}[{\cite[Definition 2.1]{pikul1}}]\label{dfn:dforest}
A pair $\Tree=(V,\parf)$ is called a~\emph{directed forest}
if $V$ is a~nonempty set and $\parf\colon V\to V$
satisfies the following condition:
\begin{property}{fdef}
if $n\in\N$, $v\in V$ and $\parf^n(v)=v$, then $\parf(v)=v$.
\end{property}
Elements of the set $V$ are called \emph{vertices} of the forest
and $\parf$ is called the \emph{parent function}. We introduce
also the set of \emph{roots}:
 $\Troot(\Tree):=\{v\in V\colon \parf(v)=v\}$. 
\end{Def}

Unless otherwise stated, $\Tree=(V,\parf)$ denotes
a directed forest.
From the operator-theoretic point of view infinite forests
(i.e\ with infinitely many vertices) are of higher interest.

For $v\in V$, we define the set of \emph{$k$-th children of $v$} as
\begin{equation}\label{eq:child-sets}
\Chif[0](v):=\{v\},\quad \Chif[k](v):=\{u\in V\colon \parf^k(u)=v\neq \parf^{k-1}(u)\},\quad k\in\N.
\end{equation}
We skip the index $k=1$ and call elements of the set $\Chif(v)=\Chif[1](v)$
simply \emph{children of $v$}. If $u\in\Chif(v)$, then $v$ is called the \emph{parent} of $u$.
The cardinality of the set $\Chif(v)$ is called \emph{the degree}
of the vertex $v$ and denoted by $\deg(v)$.
We also denote the set of all \emph{descendants} of a vertex $v$
by $\Des(v):=\bigcup_{n=0}^\infty \Chif[n](v)$.
A handy observation (see \cite[Lemma 2.5]{pikul1}) is that
\begin{equation}\label{eq:Des}
\Des(v)=\{u\in V \colon \exists_{n\in\ZP}\ \parf^n(u)=v\,\}.
\end{equation}

If there is a risk of ambiguity, we write $\Chif_\Tree(v)$, $\Des_\Tree(v)$,
etc.\ to make the dependence on $\Tree$ explicit.

A \emph{leaf}\footnote{Note that if a root has no children (it is not considered
as a child of itself) it still cannot be called a `leaf'.} is an element
of $V\setminus \parf(V)$. 
If there is no leaf (i.e.\ $\parf(V)=V$) then the forest is called \emph{leafless}.
We also set $V^\circ:=V\setminus \Troot(\Tree)$. 
The forest is called \emph{degenerate} if $V^\circ = \emptyset$
(i.e.\ $\parf=\mathrm{id}_V$). Otherwise it is non-degenerate.
Basic elements of a directed forest are presented in Figure~\ref{fig:basic}.

\begin{figure}[hbt]
\includegraphics[width=.75\textwidth]{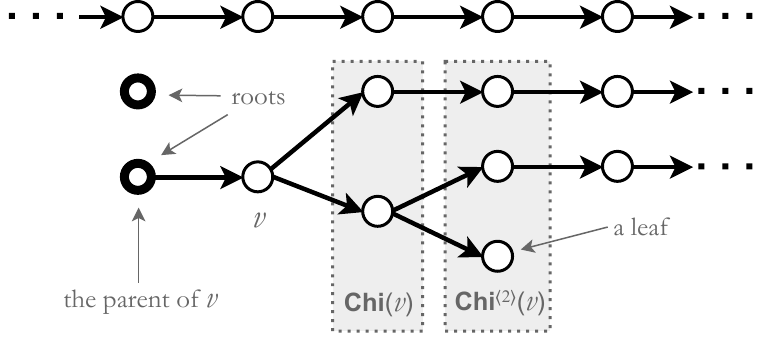}
\caption{An example of a directed forest.
The arrows lead from $\parf(u)$ to $u$ for $u\in V^\circ$.}
\label{fig:basic}\end{figure}

\begin{Rem}
If we skip the condition \eqref{eq:fdef} in Definition~\ref{dfn:dforest}
we would obtain a graph called \emph{forested circuit} \cite[Section 6]{adj-ops}.
However, then the property \cite[Lemma~2.2 (b)]{pikul2} 
no longer holds (with $N^+_{\langle k\rangle}(v)=\Chif[k](v)$).
\end{Rem}

Some basic facts regarding elements of a directed forest
were listed in \cite[Lemma 2.4]{pikul1}. Below we provide
one more handy property, the elementary proof of which is omitted.
\begin{Lem}\label{lem:tree-basics}
Let $\Tree=(V,\parf)$ be a directed forest.
If $\parf^n(v)\in V^\circ$ for some $n\in\ZP$, then
$\parf^k(v)\neq\parf^l(v)$ provided that $0\leq k,l\leq n+1$ and $k\neq l$,
\end{Lem}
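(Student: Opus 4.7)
The plan is to proceed by contradiction, using only the defining property \eqref{eq:fdef} of a directed forest.

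Suppose the conclusion fails: there exist indices $k,l$ with $0\leq k<l\leq n+1$ and $\parf^k(v)=\parf^l(v)$. Set $u:=\parf^k(v)$ and $m:=l-k$, so $m\in\N$ and $\parf^m(u)=u$. By condition \eqref{eq:fdef} applied to $u$ with the exponent $m$, this forces $\parf(u)=u$, i.e., $u\in\Troot(\Tree)$.

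From $\parf(u)=u$ one gets $\parf^j(u)=u$ for every $j\in\ZP$ by a trivial induction. Since $k<l\leq n+1$, we have $k\leq n$, so $n-k\in\ZP$ and
\[
\parf^n(v)=\parf^{n-k}(\parf^k(v))=\parf^{n-k}(u)=u\in\Troot(\Tree).
\]
This contradicts the hypothesis $\parf^n(v)\in V^\circ=V\setminus\Troot(\Tree)$, completing the proof.

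There is no real obstacle here: the whole argument is a one-step application of \eqref{eq:fdef} followed by iterating $\parf(u)=u$. The only thing to be careful about is the bookkeeping of indices, namely that the bound $l\leq n+1$ (rather than $l\leq n$) still leaves $k\leq n$, which is precisely what is needed to push the root property from $\parf^k(v)$ down to $\parf^n(v)$.
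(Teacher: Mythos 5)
Your proof is correct: the contradiction argument via \eqref{eq:fdef} (a repeated value forces $\parf^k(v)$ to be a fixed point, which then propagates to $\parf^n(v)\in\Troot(\Tree)$, contradicting $\parf^n(v)\in V^\circ$) is sound, and the index bookkeeping $k\leq n$ is handled properly. The paper omits its proof as elementary, and your argument is precisely the kind of direct application of the defining condition that is intended.
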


\begin{Cor}\label{cor:leafless}
Let $\Tree=(V,\parf)$ be a directed forest. Then $\Tree$ is leafless
if and only if for each $v_0\in V^\circ$ there exists a sequence
$\{v_n\}^{\infty}_{n=1}\subseteq V^\circ$ such that $\parf(v_{n+1})=v_n$
for all $n\in\ZP$.
\end{Cor}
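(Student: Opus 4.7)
The plan is to treat the two directions separately; both are short.

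For the backward implication, I would assume the sequence property and prove leaflessness, i.e., $\parf(V)=V$. Take any $v\in V$. If $v\in\Troot(\Tree)$, then $v=\parf(v)\in\parf(V)$. If instead $v\in V^\circ$, apply the hypothesis with $v_0:=v$; the resulting $v_1\in V^\circ$ satisfies $\parf(v_1)=v_0=v$, so $v\in\parf(V)$ in this case as well.

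For the forward implication, I would assume $\Tree$ is leafless and construct the sequence inductively. Given $v_n\in V^\circ$, leaflessness yields at least one $v_{n+1}\in V$ with $\parf(v_{n+1})=v_n$; choose any such $v_{n+1}$. The one thing that needs checking is that $v_{n+1}\in V^\circ$. This is immediate: if $v_{n+1}\in\Troot(\Tree)$, then $\parf(v_{n+1})=v_{n+1}$, so combined with $\parf(v_{n+1})=v_n$ we get $v_n=v_{n+1}$, and hence $\parf(v_n)=v_n$, contradicting $v_n\in V^\circ$. Starting from the given $v_0\in V^\circ$, this step produces $v_1,v_2,\ldots\in V^\circ$ with $\parf(v_{n+1})=v_n$ for every $n\in\ZP$.

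I do not anticipate any serious obstacle: the argument uses only leaflessness of $\Tree$ and the definition of a root, and does not require invoking Lemma~\ref{lem:tree-basics} or condition~\eqref{eq:fdef} beyond what is already encoded in the notion of root. If one prefers, the inductive construction can be phrased non-constructively via the axiom of (dependent) choice, but no deeper structural property of forests is needed.
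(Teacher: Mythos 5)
Your proof is correct and takes essentially the same route as the paper: the forward direction is the same inductive construction (where the paper cites facts (d) and (f) of \cite[Lemma 2.4]{pikul1}, you verify directly that a $\parf$-preimage of a non-root vertex is again a non-root), and the backward direction is the same straightforward verification that $\parf(V)=V$, where your direct use of the definition of a root even makes the paper's appeal to Lemma~\ref{lem:tree-basics} unnecessary. No gaps.
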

\begin{proof}
If $\Tree$ is leafless, then the required sequence can be constructed
by induction using (f) and (d) from \cite[Lemma 2.4]{pikul1}.

If the sequence $\{v_n\}^{\infty}_{n=0}\subseteq V^\circ$ such that $\parf(v_{n+1})=v_n$
for all $n\in\ZP$ is given, then by 
Lemma~\ref{lem:tree-basics} $v_0\neq v_1$ and hence
$v_1\in\Chif(v_0)$ and $v_0$ is not a leaf.
\end{proof}

By a \emph{tree} in the directed forest $\Tree$ we mean
a connected component of the graph (cf.\ \cite[page~5]{pikul1}).
For a vertex $v\in V$ in a directed forest we will denote the tree
to which $v$ belongs by $[v]=[v]_\Tree \subseteq V$.
If $[v]=\{v\}$, then we call such a tree \emph{degenerate}.
Every tree in a degenerate forest is degenerate.
Clearly, $\parf^n(v)\in[v]$ and $\Chif[n](v)\subseteq[v]$
for every $v\in V$ and $n\in\ZP$.
Basic properties of trees are listed in \cite[Lemma 2.5]{pikul1}.

The directed forest whose vertices are all of degree at most
$\aleph_0$ (equivalently: all trees are countable)
will be called \emph{locally countable}.
A~\emph{directed tree} is a directed forest containing only one tree (a ``connected forest'').
Our definition of a directed tree is consistent with the ``traditional'' one
(see, e.g., \cite{szifty}), as described by \cite[Remark~2.3]{pikul2}.

For directed forests a natural notion of isomorphism can be introduced
(see \cite[Definition 2.8]{pikul1}).
In most cases we are not making any distinction between isomorphic forests.
Forests are said to be disjoint if their sets of vertices are disjoint.
We can always assume that the considered forests are disjoint by
taking appropriate isomorphic forests if necessary.


For a~set $V$ we consider Hilbert space $\eld{V}$ consisting of all
square-summable complex functions on $V$ with the following inner product
\[ \scp{f,g}:=\sum_{v\in V} f(v)\overline{g(v)},\quad f,g\in\eld{V}.\]
The canonical orthonormal basis of $\eld{V}$ will be denoted by $\{e_v:v\in V\}$.

\begin{Def}[{\cite[Definition 3.1]{pikul1}, see also \cite[Definition 3.1.1]{szifty}}]\label{dfn:shift-op}
For a~directed forest $\Tree=(V,\parf)$ and a~set
of complex weights $\blambda=\{\lambda_v\}_{v\in V}$ such that $\lambda_\omega=0$ for
$\omega\in \Troot(\Tree)$,
we define the operator $S_\blambda$ in $\eld{V}$, called the
\emph{weighted shift on $\Tree$ with weights $\blambda$}, as follows:
\[ \dom(S_\blambda):=\{f\in \ell^2(V)\colon \varGamma_\blambda(f) \in \eld{V}\}, \]
\[ S_\blambda (f):= \varGamma_\blambda(f),\quad f\in \dom(S_\blambda),\]
where $\varGamma_\blambda \colon\C^V\to \C^V$ is defined by
$\varGamma_\blambda(f)(v):= \lambda_v f(\parf(v))$.
In what follows, saying ``$\blambda$ is a system of weights on
$\Tree$'', we always assume that $\lambda_\omega=0$ for $\omega\in\Troot(\Tree)$.

We call a weighted shift $S_\blambda$ \emph{proper} if
$\{v\in V\colon\lambda_v=0\}= \Troot(\Tree)$.
\end{Def}

In this article we focus on bounded operators exclusively. 
For a weighted shift this is equivalent to the finiteness of
$\sup\{\sum_{u\in\Chif(v)} |\lambda_u|^2: v\in V\}$ and then the action
of the operator is described by the formula \eqref{eq:naBazowym}.
This and other basic properties of weighted shifts on directed forests were
presented in \cite[Proposition 3.3]{pikul1}. They do not differ
from the case of directed trees described in \cite{szifty}
(cf.\ Propositions 3.1.3 and 3.1.8 therein).

In view of Definition~\ref{dfn:shift-op} (cf.\ formula \eqref{eq:naBazowym}),
a classical unilateral weighted shift on $\eld{\ZP}$ is a weighted shift on
the directed tree $\Tree=(\ZP,\parf)$, where $\parf(0)=0$ and $\parf(n+1)=n$
for $n\geq 0$. For a bilateral weighted shift (acting on $\eld{\mathbb Z}$)
we use the tree $\Tree=(\mathbb Z,\parf_{\mathbb Z})$ with
$\parf_{\mathbb Z}(n)=n-1$ for $n\in\mathbb Z$\label{dfn:linear-tree}.


Many properties extend from weighted shifts on directed trees
to the weighted shifts on directed forests.
For example, the description of the polar decomposition
\cite[Proposition 3.5.1]{szifty} or criteria for hyponormality
(\cite[Theorem 5.1.2]{szifty}, cf.\ \cite[Theorem 6.2]{pikul1})
and subnormality (\cite[Theorem 6.1.3]{szifty},
cf.\ \cite[Theorem 5.3]{pikul1}) remain valid.

\section{Relation of thickness}
Apart from isomorphism, there is another important relation on
directed forests. It is the major tool introduced in this article
which was not present in \cite{pikul1}.
\newcommand{\treeleq}{\llcurly}
\begin{Def}[Relation of thickness] \label{dfn:thickness}
Given $\parf_1,\parf_2:V\to V$ we write
$\parf_1\treeleq \parf_2$ if $\parf_1(v)\in\{v,\parf_2(v)\}$
for every $v\in V$.
If $\Tree_1=(V,\parf_1)$ and $\Tree_2=(V,\parf_2)$ are
directed forests with a common set of vertices, then $\Tree_1$ is
\emph{thinner} than $\Tree_2$
(or $\Tree_2$ is \emph{thicker} than $\Tree_1$)
if $\parf_1\treeleq \parf_2$.
We write $\Tree_1 \treeleq \Tree_2$.
\end{Def}

In a graph theoretic sense, one forest is thinner than another
if its set of edges is smaller in the sense of inclusion
(cf.\ \cite[Remark~2.3]{pikul2} 
and Theorem~\ref{thm:thickness}~\eqref{thic:child1}).
See Figure~\ref{fig:thickness} for illustration.
\begin{figure}[hbt]\vspace{1.5ex}
\includegraphics{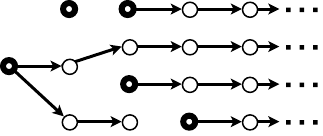}\qquad\includegraphics{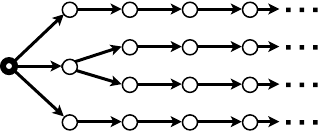}
\caption{Two directed forests with the left one being thinner
than the right one. Bold circles mark the roots.}\label{fig:thickness}
\end{figure}

\begin{Pro}\label{pro:thic-order}
For any set $V$, the relation $\treeleq$ is a partial order
on the set of all self maps of $V$. Moreover, the smallest
element is the identity map.
\end{Pro}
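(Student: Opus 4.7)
The plan is to verify the three axioms of a partial order (reflexivity, antisymmetry, transitivity) directly from the pointwise characterisation $\parf_1(v)\in\{v,\parf_2(v)\}$, and then check separately that the identity is a lower bound.

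Reflexivity is essentially a tautology: for every self map $\parf$ and every $v\in V$, one has $\parf(v)\in\{v,\parf(v)\}$. For antisymmetry, the key observation I would use is that if $\parf_1(v)\neq\parf_2(v)$ for some $v$, then the condition $\parf_1(v)\in\{v,\parf_2(v)\}$ forces $\parf_1(v)=v$, and then the condition $\parf_2(v)\in\{v,\parf_1(v)\}=\{v\}$ forces $\parf_2(v)=v$, contradicting $\parf_1(v)\neq \parf_2(v)$. So the two maps must agree everywhere.

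For transitivity, assume $\parf_1\treeleq\parf_2\treeleq\parf_3$ and fix $v\in V$. If $\parf_1(v)=v$ we are done immediately; otherwise $\parf_1(v)=\parf_2(v)$, and then applying $\parf_2\treeleq\parf_3$ at the same vertex gives either $\parf_2(v)=v$ (whence $\parf_1(v)=v$) or $\parf_2(v)=\parf_3(v)$ (whence $\parf_1(v)=\parf_3(v)$). In both cases $\parf_1(v)\in\{v,\parf_3(v)\}$.

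Finally, for the minimality of $\mathrm{id}_V$, note that $\mathrm{id}_V(v)=v\in\{v,\parf(v)\}$ for any self map $\parf$ and any vertex $v$, so $\mathrm{id}_V\treeleq\parf$ unconditionally. None of the steps looks like a genuine obstacle; the whole proposition is essentially a matter of unwinding the definition and handling the two-element set $\{v,\parf_i(v)\}$ case by case, with the only mildly non-obvious point being the antisymmetry argument sketched above.
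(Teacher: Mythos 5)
Your proposal is correct and follows essentially the same route as the paper: a pointwise case analysis of the two-element set $\{v,\parf_i(v)\}$ for reflexivity and transitivity, the observation that a disagreement at $v$ forces $\parf_1(v)=v$ and then rules out $\parf_2\treeleq\parf_1$ for antisymmetry, and the tautological check that $\mathrm{id}_V(v)=v\in\{v,\parf(v)\}$ for minimality of the identity. The only cosmetic difference is that you phrase antisymmetry as a contradiction while the paper argues by contraposition; the content is identical.
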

\begin{proof}
The fact that $\mathrm{id}_V\treeleq \parf$ for any
$\parf:V\to V$ is evident, since always $\mathrm{id}_V(v)\in\{v,\parf(v)\}$. 

\emph{Reflexivity.} Clearly $\parf(v)\in\{v,\parf(v)\}$, hence $\parf\treeleq \parf$.

\emph{Antisymmetry.} Assume that $\parf_1(v)\neq\parf_2(v)$ for
some $v\in V$. If $\parf_1\treeleq \parf_2$,
then $\parf_1(v)\in\{v,\parf_2(v)\}\setminus\{\parf_2(v)\}$
implying that $\parf_1(v)=v$. Hence $\parf_2(v)\notin\{v,\parf_1(v)\}$
and $\parf_2\treeleq \parf_1$ cannot hold. In particular
we obtain, that if $\parf_1\treeleq\parf_2$ and
$\parf_2\treeleq\parf_1$, then $\parf_1=\parf_2$.

\emph{Transitivity.}
Let $\parf_1,\parf_2,\parf_3$ be self maps of $V$ satisfying
$\parf_1\treeleq \parf_2$ and $\parf_2\treeleq \parf_3$.
Pick $v\in V$. By definition, $\parf_1(v)\in\{v,\parf_2(v)\}$.
Since $\parf_2(v)\in\{v,\parf_3(v)\}$, we obtain that
$\parf_1(v)\in\{v,\parf_3(v)\}$ and the proof is complete.
\end{proof}

The following two lemmata will be useful in the future proofs.

\begin{Lem}\label{lem:thin-forest}
Let $\Tree=(V,\parf_0)$ be a directed forest and $\parf_1:V\to V$
be such that $\parf_1\treeleq\parf_0$.
Then $(V,\parf_1)$ is a directed forest.
\end{Lem}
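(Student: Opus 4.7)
The plan is to verify the single non-trivial axiom \eqref{eq:fdef} for the pair $(V,\parf_1)$, since $V$ is nonempty by assumption. So I would fix $n\in\N$ and $v\in V$ with $\parf_1^n(v)=v$, and aim to conclude $\parf_1(v)=v$. The natural object to analyse is the finite orbit $w_k:=\parf_1^k(v)$, $0\le k\le n$, which satisfies $w_0=w_n=v$ and, by the definition of $\treeleq$, $w_{k+1}\in\{w_k,\parf_0(w_k)\}$ at every step.

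I would then split into two cases depending on whether this orbit is ever ``stationary''. If there exists some $k\in\{0,\ldots,n-1\}$ with $w_{k+1}=w_k$, then $w_k$ is a fixed point of $\parf_1$, so $w_k=w_{k+1}=\cdots=w_n=v$; in particular $w_k=v$, so $\parf_1(v)=\parf_1(w_k)=w_{k+1}=v$, as needed. Otherwise $w_{k+1}\neq w_k$ for every $k<n$, which forces $w_{k+1}=\parf_0(w_k)$ at each step. An easy induction then gives $\parf_0^n(v)=w_n=v$; invoking the forest property \eqref{eq:fdef} already known to hold for $(V,\parf_0)$ yields $\parf_0(v)=v$. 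Since $\parf_1(v)\in\{v,\parf_0(v)\}=\{v\}$, we obtain $\parf_1(v)=v$.

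I do not expect any real obstacle here: everything reduces to a finite two-case analysis along the orbit, and the only external input is the forest property applied to $\parf_0$. The only mild subtlety is to notice that as soon as the $\parf_1$-orbit ``sits still'' once it must remain still forever, which is what lets us transfer information back from $w_n=v$ to $\parf_1(v)=v$; and conversely, if it never sits still then the $\parf_1$-orbit coincides with the $\parf_0$-orbit, which is exactly what is needed to invoke \eqref{eq:fdef} for $\parf_0$.
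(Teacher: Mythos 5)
Your proof is correct. It differs in presentation from the paper's: the paper disposes of the lemma in two lines by citing an ``edge-removal'' result from the earlier reference (\cite[Lemma 2.6]{pikul1}), applied with $R=\{v\in V\colon \parf_0(v)\neq\parf_1(v)\}$, and observing that the thinness hypothesis $\parf_1\treeleq\parf_0$ forces the parent function produced by that external lemma to coincide with $\parf_1$ (since $\parf_1(v)=v$ exactly where the two functions differ). You instead verify the defining condition \eqref{eq:fdef} directly: for a $\parf_1$-periodic vertex you track the finite orbit $w_k=\parf_1^k(v)$, and either the orbit becomes stationary at some step, in which case stationarity propagates to $w_n=v$ and gives $\parf_1(v)=v$, or it never does, in which case the $\parf_1$-orbit coincides with the $\parf_0$-orbit and \eqref{eq:fdef} for the original forest yields $\parf_0(v)=v$, hence $\parf_1(v)\in\{v,\parf_0(v)\}=\{v\}$. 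Both arguments are sound; yours is self-contained and elementary (and is essentially the kind of verification hidden inside the cited lemma), while the paper's buys brevity at the cost of depending on the external reference. A minor cosmetic remark: in your second case the conclusion $\parf_1(v)=v$ shows that case to be vacuous for $n\geq 1$, but this does not affect the validity of the case analysis, since the desired equality is obtained either way.
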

\begin{proof}
It follows from \cite[Lemma 2.6]{pikul1} 
applied to
$\Tree_0=\Tree$ and $R=\{v\in V: \parf_0(v)\neq\parf_1(v)\}$.
Observe that $\parf_1\treeleq\parf_0$ implies that $\parf=\parf_1$.
\end{proof}

\begin{Lem}\label{lem:thin-anc}
Let $\Tree_1=(V,\parf_1)$ and $\Tree_2=(V,\parf_2)$ be directed
forests satisfying $\Tree_1 \treeleq \Tree_2$. Fix $v\in V$ and
$n\in\ZP$. Then either $\parf_1^n(v)=\parf_2^n(v)$ or
$\parf_1^k(v)=\parf_2^k(v) \in\Troot(\Tree_1)$ for some $0\leq k< n$.
\end{Lem}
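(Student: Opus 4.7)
The plan is to proceed by induction on $n\in\ZP$. The base case $n=0$ is immediate, since $\parf_1^0(v)=v=\parf_2^0(v)$, which is the first alternative. No appeal to the forest axiom \eqref{eq:fdef} or to Lemma~\ref{lem:thin-forest} appears necessary; the statement is really a step-by-step comparison of the two orbits of $v$ under $\parf_1$ and $\parf_2$.

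For the inductive step, I would assume the dichotomy at level $n$ and derive it at level $n+1$. If the second alternative already holds at level $n$, with some witness $0\le k<n$, then the same $k$ satisfies $0\le k<n+1$ and we are done. Otherwise, we are in the first alternative, and I set $w:=\parf_1^n(v)=\parf_2^n(v)$. The thickness hypothesis $\parf_1\treeleq\parf_2$ (Definition~\ref{dfn:thickness}) forces the binary choice $\parf_1(w)\in\{w,\parf_2(w)\}$. In the case $\parf_1(w)=\parf_2(w)$, applying the parent functions once more yields $\parf_1^{n+1}(v)=\parf_1(w)=\parf_2(w)=\parf_2^{n+1}(v)$, i.e.\ the first alternative at $n+1$. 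In the case $\parf_1(w)=w$, the vertex $w$ lies in $\Troot(\Tree_1)$ by definition of the root set, and the choice $k:=n$ witnesses the second alternative at $n+1$ (since $k=n<n+1$ and $\parf_1^k(v)=\parf_2^k(v)=w\in\Troot(\Tree_1)$).

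The only subtlety I anticipate is the index bookkeeping: the strict inequality $k<n$ in the conclusion is what makes the induction thread through, because the choice $k=n$ that appears when passing from the ``agree'' regime to the ``rooted'' regime becomes admissible exactly when the outer level increases to $n+1$. Once this is observed, the proof reduces to a routine case split with no genuine obstacle.
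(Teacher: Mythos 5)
Your proof is correct, and the key step is the same as in the paper: at a vertex $w$ where the two orbits still agree, the thickness relation gives $\parf_1(w)\in\{w,\parf_2(w)\}$, so either the orbits agree one step further or $w\in\Troot(\Tree_1)$. The paper packages this non-inductively, by taking the maximal index $k$ up to which $\parf_1^j(v)=\parf_2^j(v)$ and deriving a contradiction with maximality in the ``agree'' case, but this is only a cosmetic difference from your induction on $n$.
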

\begin{proof} 
Fix $n\in\ZP$. If $\parf^n_1(v)\neq\parf^n_2(v)$, then we define
\[ k=\max\{m\in\ZP\colon \parf^j_1(v)=\parf^j_2(v),\ j=0,\ldots,m\}. \]
Clearly, $k<n$. From $\Tree_1\treeleq\Tree_2$ we infer that either 
$\parf_1^{k+1}(v)=\parf_2(\parf_1^k(v))$ or
$\parf_1^{k+1}(v)=\parf_1^k(v)$, and consequently $\parf_1^k(v)\in\Troot(\Tree_1)$.
In the first case, we see that $\parf_1^{k+1}(v)=\parf_2^{k+1}(v)$, which contradicts
maximality of $k$. Hence, we have $\parf_1^k(v)=\parf_2^k(v)\in\Troot(\Tree_1)$.
This completes the proof.
\end{proof}

\begin{Thm}[Properties of the thickness relation]\mbox{}
\label{thm:thickness}\begin{abece}
\item \label{thic:child1}
If $\Tree_1=(V,\parf_1)$ and $\Tree_2=(V,\parf_2)$ are directed forests,
then $\Tree_1 \treeleq \Tree_2$ if and only if
\begin{equation}
\Chif_{\Tree_1}(v)\subseteq \Chif_{\Tree_2}(v) \text{ for every } v\in V.
\label{eq:thick-chi}
\end{equation}
\item\label{thic:order}
Thickness is a partial order on the set of all directed forests
with a fixed set of vertices.
\item\label{thic:minimum}
The degenerate forest is the thinnest directed forest with a fixed
set of vertices $($the smallest element with respect to the partial order of thickness$)$.
\item\label{thic:maximum}
The maximal elements with respect to the relation of thickness are
precisely the directed trees and rootless directed forests.
\end{abece}\end{Thm}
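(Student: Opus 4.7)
My plan is to dispatch parts \ref{thic:child1}--\ref{thic:minimum} as quick observations and then attack \ref{thic:maximum} by separately verifying that the two claimed families are maximal and ruling out all remaining forests.

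For \ref{thic:child1}, I would use that $u\in\Chif_{\Tree_i}(v)$ is equivalent to $\parf_i(u)=v\neq u$ by \eqref{eq:child-sets}. The forward direction: if $\parf_1(u)=v\neq u$ and $\parf_1(u)\in\{u,\parf_2(u)\}$, then necessarily $\parf_2(u)=v$. The backward direction splits on whether $v\in\Troot(\Tree_1)$: if yes, then $\parf_1(v)=v\in\{v,\parf_2(v)\}$ automatically; if no, then $v\in\Chif_{\Tree_1}(\parf_1(v))\subseteq\Chif_{\Tree_2}(\parf_1(v))$ forces $\parf_2(v)=\parf_1(v)$. Part \ref{thic:order} is then the restriction of Proposition~\ref{pro:thic-order} to the subset of parent functions of forests. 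For part \ref{thic:minimum}, $\mathrm{id}_V$ is the least self-map by Proposition~\ref{pro:thic-order} and trivially satisfies \eqref{eq:fdef}, defining the degenerate forest.

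The substantive work is \ref{thic:maximum}. For the maximality of a rootless forest, if $\parf(v)\neq v$ for every $v$ and $\parf\treeleq\parf'$, then $\parf(v)\in\{v,\parf'(v)\}\setminus\{v\}=\{\parf'(v)\}$, so $\parf=\parf'$. For the maximality of a directed tree with root $\omega$, the same argument handles every $v\neq\omega$; I would argue by contradiction at $v=\omega$. Assuming $\parf'(\omega)=u\neq\omega$, connectedness forces $u\in\Des_\Tree(\omega)$, so by \eqref{eq:Des} some $\parf^k(u)=\omega$ with minimal $k\in\N$. Using that $\parf^j(u)\neq\omega$ for $0\le j<k$ (otherwise the fixed-point property of $\omega$ in $\Tree$ contradicts minimality) and that $\parf'$ coincides with $\parf$ on $V\setminus\{\omega\}$, a short induction gives $(\parf')^k(u)=\omega$, whence $(\parf')^{k+1}(\omega)=\omega$. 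Condition \eqref{eq:fdef} applied to $(V,\parf')$ then forces $\parf'(\omega)=\omega$, a contradiction.

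For the converse, suppose $\Tree=(V,\parf)$ is neither a directed tree nor rootless, so it has at least one root $\omega$ together with a vertex $v\in V\setminus[\omega]_\Tree$ from another tree. I would define $\parf'(\omega):=v$ and $\parf'(u):=\parf(u)$ for $u\neq\omega$, giving a strict thickening provided $(V,\parf')$ is a forest. Verifying \eqref{eq:fdef} for $\parf'$ is the main technical point and the step most likely to hide a slip: I would track the $\parf'$-orbit of an arbitrary vertex, noting that $\parf'$ agrees with $\parf$ on $V\setminus\{\omega\}$ and that trees are $\parf$-invariant. An orbit starting in $V\setminus[\omega]_\Tree$ therefore remains there and inherits \eqref{eq:fdef} directly from $\Tree$, while an orbit starting in $[\omega]_\Tree$ follows $\parf$ up to $\omega$, then jumps via the new edge to $v\in V\setminus[\omega]_\Tree$ and never returns. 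No nontrivial $\parf'$-cycle can arise, so $(V,\parf')$ is a forest strictly thicker than $\Tree$, showing that $\Tree$ is not maximal.
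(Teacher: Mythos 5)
Your proposal is correct and follows essentially the same route as the paper: the same child-set argument for \eqref{thic:child1}, reduction of \eqref{thic:order}--\eqref{thic:minimum} to Proposition~\ref{pro:thic-order}, the same maximality arguments for rootless forests and rooted trees, and the same re-attachment of a root $\omega$ to a vertex of another tree to show non-maximality otherwise. The only cosmetic difference is that where the paper invokes Lemma~\ref{lem:thin-anc} to transfer $\parf_1^n(v)=\omega$ to $\parf_2^n(v)=\omega$, you inline the corresponding induction directly.
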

\begin{proof}
\eqref{thic:child1}
`$\Rightarrow$' Pick $v\in V$ and $u\in \Chif_{\Tree_1}(v)$. Since
$v=\parf_1(u)\in\{u,\parf_2(u)\}$ and $u\neq v$ (cf.~\eqref{eq:child-sets})
we obtain $\parf_2(u)=v$, and consequently $u\in\Chif_{\Tree_2}(v)$.

`$\Leftarrow$' Consider $v\in V$. If $v\in\Troot(\Tree_1)$,
then clearly $\parf_1(v)\in\{v,\parf_2(v)\}$.
For $v\in V\setminus\Troot(\Tree_1)$, we have
$v\in\Chif_{\Tree_1}(u) \subseteq \Chif_{\Tree_2}(u)$, where
$u:=\parf_1(v)\neq u$. Hence $\parf_2(v) = u=\parf_1(v)$ and
thus $\parf_1(v)\in\{v,\parf_2(v)\}$.

\eqref{thic:order} and \eqref{thic:minimum}
are covered by Proposition~\ref{pro:thic-order}.

\eqref{thic:maximum}
Assume $\Tree_0=(V,\parf_0)$ is a rootless directed forest and
$\Tree_0 \treeleq \Tree_1$ for some directed forest $\Tree_1=(V,\parf_1)$.
For any $v\in V$, $\parf_0(v) \in \{v, \parf_1(v)\}$ implies
$v\neq \parf_0(v)=\parf_1(v)$ and proves maximality of $\Tree_0$.

Now consider a rooted directed tree (the case of rootless directed tree
was already solved) $\Tree_1=(V,\parf_1)$ and assume
$\Tree_1 \treeleq \Tree_2 =(V,\parf_2)$. Let $\omega$ be the root
of $\Tree_1$. For $v\in V\setminus \{\omega\}$ we repeat the argument above
obtaining $\parf_1(v)=\parf_2(v)$ for $v\neq \omega$. Now we will prove that
$\parf_2(\omega)=\parf_1(\omega)$.
Denote $v:=\parf_2(\omega)$. Then, since $V=\Des_{\Tree_1}(\omega)$, there exists minimal
$n\in\ZP$ such that $\parf_1^n(v)=\omega$.
According to Lemma~\ref{lem:thin-anc}, either $\parf_2^n(v)=\omega$, or
$\parf_1^k(v)\in\Troot(\Tree_1)=\{\omega\}$ for some $k<n$ which contradicts
the minimality of $n$. Hence $\parf_2^{n+1}(\omega)=\omega$,
which by \eqref{eq:fdef} gives $\parf_2(\omega)=\omega =\parf_1(\omega)$,
and consequently $\Tree_1=\Tree_2$.

To complete the proof of \eqref{thic:maximum} pick a~directed forest
$\Tree_1=(V,\parf_1)$ which is not rootless and consists of more than one tree.
We will find a~directed forest $\Tree_2\neq \Tree_1$ such that 
$\Tree_1\treeleq\Tree_2$.
Pick $\omega\in\Troot(\Tree_1)$ and $v_0\in V\setminus[\omega]_{\Tree_1}$
(we recall that $[\omega]_{\Tree_1}$ stands for the tree containing $\omega$).
Set $\parf_2(v):=\parf_1(v)$ for $v\in V\setminus\{\omega\}$ and $\parf_2(\omega):=v_0$ (cf.\ Figure~\ref{fig:thick-max}).
To prove that a directed forest $\Tree_2:=(V,\parf_2)$ is well defined
assume that $\parf_2^n(v)= v$ for some $n\geq 1$.
If $v\notin [\omega]_{\Tree_1}$, we have $\parf_2^k(v)=\parf_1^k(v)$ for any
$k\in\ZP$ (trees are invariant sets for $\parf_1$) and by \eqref{eq:fdef}
with $\parf=\parf_1$ we obtain $\parf_2(v)=\parf_1(v)=v$.
Now assume that $v\in  [\omega]_{\Tree_1} = \Des_{\Tree_1}(\omega)$.
If $\parf_1^k(v)\neq \omega$ for every $k=0,\ldots,n-1$, then
$v=\parf_2^n(v)=\parf_1^n(v)$ which implies that $v\in\Troot(\Tree_1)$,
a contradiction.
Otherwise, if $\parf_1^{n_0}(v)= \omega$ for some $n_0<n$,
then $\parf_2^n(v)=\parf_2^{n-n_0}(\omega)\notin [\omega]_{\Tree_1}$,
which contradicts $v=\parf_2^n(v)$.
By definition $\Tree_2\neq \Tree_1$ and $\Tree_1\treeleq\Tree_2$,
which completes the proof.
\end{proof}
\begin{figure}[hbt]
\includegraphics[scale=1]{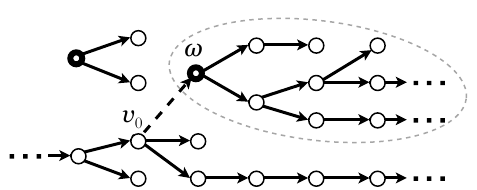}
\caption[Construction of a strictly thicker directed forest]%
{Construction of a strictly thicker directed forest. See Theorem~\ref{thm:thickness}~\eqref{thic:maximum}.}
\label{fig:thick-max}
\end{figure}

As one might expect, the relation of thickness implies a comparison of
several structural elements of the forests involved.
\begin{Lem}\label{lem:thick2}
Let $\Tree_1=(V,\parf_1)$ and $\Tree_2=(V,\parf_2)$ be directed forests
satisfying $\Tree_1 \treeleq \Tree_2$. Then
\begin{abece}
\item \label{thic:childn} 
$\displaystyle \Chit{k}{\Tree_1}(v)\subseteq \Chit{k}{\Tree_2}(v)$
for all $v\in V$ and $k\in\N$,
\item\label{thic:roots}
$\Troot(\Tree_1)\supseteq \Troot(\Tree_2)$,
\item\label{thic:trees}
every tree in $\Tree_1$ is contained in some tree in $\Tree_2$;
in particular, the number of trees in the forest $\Tree_2$ is less than or equal
to the number of trees in $\Tree_1$,
\item\label{thic:trees-eq}
every tree in $\Tree_2$ contains exactly one tree in $\Tree_1$
if and only if $\Tree_1=\Tree_2$; in particular, a directed forest strictly
thinner than a directed tree is never a directed tree.
\end{abece}
\end{Lem}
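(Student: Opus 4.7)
The plan is to handle the four parts essentially independently, leveraging the one-step statement Theorem~\ref{thm:thickness}\eqref{thic:child1} and Lemma~\ref{lem:thin-anc} for iteration. For \eqref{thic:childn}, I would induct on $k$, with the base case $k=1$ supplied directly by Theorem~\ref{thm:thickness}\eqref{thic:child1}. In the inductive step, starting from $u\in\Chit{k+1}{\Tree_1}(v)$, the relation $\parf_1^{k+1}(u)=v\neq\parf_1^k(u)$ forces $u\notin\Troot(\Tree_1)$ (otherwise $\parf_1^m(u)=u$ for all $m$), so the base case yields $\parf_2(u)=\parf_1(u)$; feeding $\parf_1(u)\in\Chit{k}{\Tree_1}(v)$ into the inductive hypothesis then promotes $u$ to a $(k{+}1)$-st $\Tree_2$-child of $v$. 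Part~\eqref{thic:roots} I would dispatch in a single line: if $\parf_2(v)=v$, then $\parf_1(v)\in\{v,\parf_2(v)\}=\{v\}$, so $v\in\Troot(\Tree_1)$.

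For \eqref{thic:trees} I would transport Theorem~\ref{thm:thickness}\eqref{thic:child1} to the undirected picture: every edge of $\Tree_1$ is an edge of $\Tree_2$, so $\Tree_1$-connectedness implies $\Tree_2$-connectedness and $[v]_{\Tree_1}\subseteq[v]_{\Tree_2}$ for every $v\in V$. The induced assignment $[v]_{\Tree_1}\mapsto[v]_{\Tree_2}$ is then a well-defined surjection from $\Tree_1$-trees onto $\Tree_2$-trees, giving the cardinality inequality.

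Part~\eqref{thic:trees-eq} is the substantive step. The forward direction is immediate, so I would prove the contrapositive of the converse: assuming $\Tree_1\neq\Tree_2$, pick $v$ with $\parf_1(v)\neq\parf_2(v)$ and note that the definition of thickness (as in the antisymmetry argument of Proposition~\ref{pro:thic-order}) forces $v\in\Troot(\Tree_1)$ and $\parf_2(v)\neq v$. The crux will be to show $\parf_2(v)\notin[v]_{\Tree_1}=\Des_{\Tree_1}(v)$; once this is in hand, $[v]_{\Tree_1}$ and $[\parf_2(v)]_{\Tree_1}$ are distinct $\Tree_1$-trees both sitting inside $[v]_{\Tree_2}$, so some $\Tree_2$-tree contains at least two $\Tree_1$-trees. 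To establish the crux, I would argue by contradiction: assume $\parf_2(v)\in\Des_{\Tree_1}(v)$ and pick the minimal $n\geq 1$ with $\parf_1^n(\parf_2(v))=v$; then apply Lemma~\ref{lem:thin-anc} to $\parf_2(v)$ with index $n$. One branch yields $\parf_2^{n+1}(v)=\parf_2^n(\parf_2(v))=v$, and \eqref{eq:fdef} applied to $\Tree_2$ then forces $\parf_2(v)=v$, a contradiction; the other branch yields $\parf_1^k(\parf_2(v))\in\Troot(\Tree_1)$ for some $k<n$, in which case that vertex is $\parf_1$-fixed and hence equals $\parf_1^n(\parf_2(v))=v$, contradicting the minimality of $n$. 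The ``in particular'' clause is then immediate, since a directed tree is a forest with exactly one component.

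The hard part will be this disjointness claim in \eqref{thic:trees-eq}, namely ruling out that the new edge created by $\parf_2$ at a $\Tree_1$-root $v$ can loop back into $[v]_{\Tree_1}$; this is where the axiom \eqref{eq:fdef}, filtered through Lemma~\ref{lem:thin-anc}, plays the essential role. The other three parts reduce cleanly to Theorem~\ref{thm:thickness}\eqref{thic:child1}.
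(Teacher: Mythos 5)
Your proof is correct, and for parts \eqref{thic:childn}, \eqref{thic:roots} and \eqref{thic:trees-eq} it follows essentially the paper's lines: the same induction on $k$ anchored in Theorem~\ref{thm:thickness}~\eqref{thic:child1}, the same one-line root argument, and in \eqref{thic:trees-eq} the same reduction ($\parf_1(v)\neq\parf_2(v)$ forces $v\in\Troot(\Tree_1)$ with $\parf_2(v)\neq v$, and a return of $\parf_2(v)$ into $\Des_{\Tree_1}(v)$ would create a $\parf_2$-cycle at $v$, contradicting \eqref{eq:fdef}). The genuine divergence is in \eqref{thic:trees}: the paper derives the ancestor-chain inclusion $\{\parf_1^n(v_0)\colon n\in\ZP\}\subseteq\{\parf_2^n(v_0)\colon n\in\ZP\}$ (formula \eqref{eq:thin-anc}) from Lemma~\ref{lem:thin-anc} and then reuses exactly that inclusion in \eqref{thic:trees-eq}, whereas you obtain the containment of trees from the undirected picture (edge-set inclusion plus connectivity --- legitimate here, since the paper defines a tree as a connected component of the graph) and consequently must invoke Lemma~\ref{lem:thin-anc} afresh inside \eqref{thic:trees-eq}, applied to $\parf_2(v)$ with the minimal $n$. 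Both routes work: the paper factors the chain argument out so it is proved once and serves both parts, while yours keeps \eqref{thic:trees} more elementary at the price of redoing the chain analysis later. Two cosmetic points, neither a gap: in your second branch the value $k=0$ contradicts $\parf_2(v)\neq v$ rather than the minimality of $n$; and your labels ``forward''/``converse'' are swapped relative to what you actually prove, though the logical content (trivial implication plus: $\Tree_1\neq\Tree_2$ yields a $\Tree_2$-tree containing two distinct $\Tree_1$-trees) is exactly the paper's.
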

\begin{proof}
\eqref{thic:childn} The inclusions can be proven by induction using \eqref{eq:thick-chi}
and \cite[Lemma 2.4 (c)]{pikul1}. 

\eqref{thic:roots} If $\omega\in\Troot(\Tree_2)$, then $\parf_1(\omega)\in\{\omega,\parf_2(\omega)\}=\{\omega\}$
and hence $\omega$ is a root of $\Tree_1$.

\eqref{thic:trees}
From $\eqref{thic:childn}$ one can deduce that
$\Des_{\Tree_1}(v)\subseteq\Des_{\Tree_2}(v)$ for $v\in V$.
Fix $v_0\in V$. We will show that
\begin{equation}\label{eq:thin-anc}
\{\parf_1^n(v_0)\colon n\in \ZP\}\subseteq\{\parf_2^n(v_0)\colon n\in \ZP\}.
\end{equation}
Then, applying \cite[Lemma 2.5 (a)]{pikul1}, 
we will obtain that the tree in $\Tree_1$ containing $v_0$
is a subset of the tree in $\Tree_2$ containing $v_0$.

Now we prove \eqref{eq:thin-anc}. According to Lemma~\ref{lem:thin-anc},
either $\parf_1^n(v_0)= \parf_2^n(v_0)$ for every $n\in\ZP$ and we are done,
or $\parf_1^k(v_0)=\parf_2^k(v_0)\in \Troot(\Tree_1)$ for some $k\in\ZP$.
In the latter case, picking $k$ to be minimal, we get 
\[ \{\parf_1^n(v_0)\}_{n=0}^\infty= \{\parf_1^n(v_0)\}_{n=0}^k = \{\parf_2^n(v_0)\}_{n=0}^k, \]
so the inclusion \eqref{eq:thin-anc} holds.

The second part of \eqref{thic:trees} follows from the disjointedness of trees
in a directed forest.

\eqref{thic:trees-eq}
The ``single tree condition'' is clearly necessary for $\Tree_1=\Tree_2$.
Assume $\Tree_1\neq \Tree_2$ and fix $v\in V$ such that $\parf_1(v)\neq\parf_2(v)$.
We claim that $[v]_{\Tree_2}$ contains at least two diffrent trees in $\Tree_1$.
Indeed, $\parf_1(v)\in\{v,\parf_2(v)\}\setminus\{\parf_2(v)\}$, hence $v\in\Troot(\Tree_1)$
and $[v]_{\Tree_1}=\Des_{\Tree_1}(v)$ (see \cite[Lemma 2.5 (b)]{pikul1}). 
If $\parf_2(v)\in [v]_{\Tree_1}$, then by \eqref{eq:Des}
there exists $n\in\N$ such that $\parf_1^n(\parf_2(v))=v$.
According to \eqref{eq:thin-anc}, there exists $k\in \N$ such that
$\parf_2^k(\parf_2(v))=v$, which by \eqref{eq:fdef} implies that
$\parf_2(v)=v=\parf_1(v)$, a contradiction. We have proven that
$[v]_{\Tree_1}\neq[\parf_2(v)]_{\Tree_1}$ while both are contained in
$[v]_{\Tree_2}=[\parf_2(v)]_{\Tree_2}$ (by \eqref{thic:trees}).
\end{proof}

Sometimes it is convenient to modify the underlying directed forest.
The following lemma provides a natural sufficient condition under
which the modification does not affect the weighted shift operator.
\begin{Lem}\label{lem:thicklambda}
If $\blambda$ is a system of weights on a directed forest
$\Tree_1=(V,\parf_1)$ and $\Tree_2=(V,\parf_2)$ is a directed forest thicker
than $\Tree_1$, then $\blambda$ is a system of weights on $\Tree_2$ and
the weighted shifts with weights $\blambda$ on directed trees $\Tree_1$ and $\Tree_2$
respectively are equal as operators acting on $\eld{V}$.
\end{Lem}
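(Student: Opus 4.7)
The plan is to reduce the statement to a pointwise identity $\varGamma_{\blambda,1}(f)(v)=\varGamma_{\blambda,2}(f)(v)$ for every $v\in V$ and every $f\in\C^V$, where $\varGamma_{\blambda,i}$ is the map from Definition~\ref{dfn:shift-op} associated with $\parf_i$. Once this is established, the domains defined in Definition~\ref{dfn:shift-op} coincide automatically (they are described solely in terms of $\varGamma_{\blambda,i}$), and the two shift operators are literally identical.

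First I would verify that $\blambda$ is a legitimate system of weights on $\Tree_2$. By Lemma~\ref{lem:thick2}\eqref{thic:roots}, thickness yields $\Troot(\Tree_2)\subseteq \Troot(\Tree_1)$. Hence $\lambda_\omega=0$ for every $\omega\in\Troot(\Tree_2)$, which is all that Definition~\ref{dfn:shift-op} requires.

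Next, fix $f\in\C^V$ and $v\in V$. By the definition of $\Tree_1\treeleq \Tree_2$ (Definition~\ref{dfn:thickness}), we have $\parf_1(v)\in\{v,\parf_2(v)\}$, so there are only two cases. If $\parf_1(v)=\parf_2(v)$ then obviously $\lambda_v f(\parf_1(v))=\lambda_v f(\parf_2(v))$. If instead $\parf_1(v)=v$, then $v\in\Troot(\Tree_1)$, so by assumption on $\blambda$ we have $\lambda_v=0$, and both $\lambda_v f(\parf_1(v))$ and $\lambda_v f(\parf_2(v))$ vanish. Thus $\varGamma_{\blambda,1}=\varGamma_{\blambda,2}$ as maps $\C^V\to\C^V$.

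The main point to be careful about is this root-vs-non-root case split, but it is essentially immediate from the definitions; there is no real obstacle. Invoking Definition~\ref{dfn:shift-op} once more, the pointwise equality of $\varGamma_{\blambda,1}$ and $\varGamma_{\blambda,2}$ yields $\dom(S_\blambda^{(1)})=\dom(S_\blambda^{(2)})$ and $S_\blambda^{(1)}f=S_\blambda^{(2)}f$ for every $f$ in that common domain, completing the proof.
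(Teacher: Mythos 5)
Your proposal is correct and follows essentially the same route as the paper: the root-containment $\Troot(\Tree_2)\subseteq\Troot(\Tree_1)$ from Lemma~\ref{lem:thick2}~\eqref{thic:roots} gives that $\blambda$ is a system of weights on $\Tree_2$, and the observation that $\parf_1(v)\neq\parf_2(v)$ forces $\lambda_v=0$ shows $\varGamma_\blambda$ (hence the domain and the operator) is unchanged. The only difference is that you spell out the pointwise case split explicitly, which the paper compresses into one sentence.
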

\begin{proof}
If $v\in\Troot(\Tree_2)$, then by Lemma~\ref{lem:thick2}~\eqref{thic:roots},
$v\in\Troot(\Tree_1)$ and consequently $\lambda_v=0$. This means that $\blambda$ is
a system of weights on $\Tree_2$.

Observe, that by the very definition of $S_\blambda$ neither the domain nor
the values of operator are affected by replacing $\parf_1$ by $\parf_2$.
In both cases we obtain the same function $\varGamma_\blambda$, since
$\parf_1(v)\neq\parf_2(v)$ implies $\lambda_v=0$.
\end{proof}

\begin{Rem}\label{rem:Tlambda-leq}
A benefit from considering disconnected forests is that we can always
``remove the edges with zero weights'' still remaining in the same class.
As stated by \cite[Proposition 3.2]{pikul1}, every weighted shift on $\Tree$
is actually a proper weighted shift on a possibly different
directed forest $\Tree_\blambda$.
 Moreover, $\Tree_\blambda\treeleq\Tree$.
Having Lemmata~\ref{lem:thin-forest} and \ref{lem:thicklambda}, the proof
is straightforward.
This property has no analogue in the class of weighted shifts on directed
trees. Removing edges from a directed tree inevitably disconnects it.\end{Rem}

\section{Operations on directed forests}
In the following section we discuss several important operations on directed
forests. Each of them has its more or less natural counterpart
when considering associated weighted shift operator.
For the sake of the main result of this article the most important operation
is taking the ``leafless support'' of a directed forest. Similarly as
the relation of thickness, this idea is new.

Denote the \emph{direct sum} of the family $\{\Tree_j\}_{j\in J}$ of
(pairwise disjoint) directed forests by $\bigoplus_{j\in J} \Tree_j$
(see \cite[Definition 2.9]{pikul1}).
The family of trees in $\bigoplus_{j\in J} \Tree_j$ is precisely the
union of families of trees in $\Tree_j$ where $j$ varies over $J$.
Any forest can be seen as the direct sum of all its trees, i.e.
\[ \Tree = \bigoplus_{W \text{\,--\,tree in }\Tree} \Tree\rest{W}. \]
Note that if $W$ is a tree, then $\Tree\rest{W}:=(W,\parf_\Tree\rest{W})$
is a directed tree.

An orthogonal sum of weighted shifts on directed forests is equal to
a weighted shift on the direct sum of underlying forests (see \cite[Proposition~3.4]{pikul1}).
The other way round, a weighted shift on a directed forest can be naturally
decomposed into weighted shifts on directed trees. It is formally
described by \cite[Lemma 3.5]{pikul1}.
According to this property, many results regarding weighted shifts on
directed trees immediately extend to operators defined on forests.

\begin{Def}[{\cite[Definition 2.12]{pikul1}}] 
For a directed forest $\Tree=(V,\parf)$ and a positive integer $k$, the
{\bf $k$-th power of $\Tree$} is defined as $\Tree^k:=(V,\parf^{[k]})$,
where
\[ \parf^{[k]}(v):= \begin{cases}
\parf^k(v) &\text{if } \parf^{k-1}(v)\notin \Troot(\Tree)\\
v &\text{if }\parf^{k-1}(v)\in \Troot(\Tree) \end{cases},\qquad v\in V.\]
\end{Def}

A naive way to define ``$k$-th power of $\Tree$'' would be taking $(V,\parf^k)$.
The fact that it is indeed a directed forest and $\Troot((V,\parf^k))=\Troot(\Tree)$
is straightforward from \eqref{eq:fdef}.
Our definition of $\Tree^k$ is motivated by needs of operator theory
(see \cite[Lemma 3.6]{pikul1}) and is natural if we remember
that ``there is no edge from a root to itself''. Thanks to the relation
of thickness and Lemma~\ref{lem:thin-forest} we can immediately
observe that the definition is correct since $\parf^{[k]}\treeleq\parf^k$.

Properties of the powers of directed forests can be found in
\cite[Lemmata 2.13 and 2.14]{pikul1}. We would like to note one
new fact:

\begin{Pro}\label{tp:thickness}
If $\Tree$ and $\tilde\Tree$ are directed forests
such that $\Tree\treeleq\tilde\Tree$,
then $\Tree^k\treeleq\tilde\Tree^k$ for $k\in\N$.
\end{Pro}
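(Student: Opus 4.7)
Write $\Tree=(V,\parf)$ and $\tilde\Tree=(V,\tilde\parf)$. The goal is to verify, for each $v\in V$ and $k\in\N$, that $\parf^{[k]}(v)\in\{v,\tilde\parf^{[k]}(v)\}$. The plan is to split on whether $\parf^{k-1}(v)\in\Troot(\Tree)$ or not, using the definition of the $k$-th power operation together with Lemma~\ref{lem:thin-anc} and Lemma~\ref{lem:thick2}\,\eqref{thic:roots}.

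In the easy case, $\parf^{k-1}(v)\in\Troot(\Tree)$, the definition gives $\parf^{[k]}(v)=v$, so the required membership is automatic. In the main case, $\parf^{k-1}(v)\notin\Troot(\Tree)$, so $\parf^{[k]}(v)=\parf^k(v)$, and it suffices to show that in fact $\parf^{[k]}(v)=\tilde\parf^{[k]}(v)$. First, I observe that the hypothesis forces $\parf^{j}(v)\notin\Troot(\Tree)$ for every $0\leq j\leq k-1$, because a root is a fixed point of $\parf$, so if some earlier iterate landed in $\Troot(\Tree)$ all subsequent iterates would stay there. Applying Lemma~\ref{lem:thin-anc} with $n=k-1$, the second alternative is excluded by this observation, hence $\parf^{k-1}(v)=\tilde\parf^{k-1}(v)$. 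Setting $u:=\parf^{k-1}(v)$, we have $u\notin\Troot(\Tree)$, so $\parf(u)\neq u$, and from $\parf\treeleq\tilde\parf$ we conclude $\parf(u)=\tilde\parf(u)$; thus $\parf^k(v)=\tilde\parf^k(v)$.

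It remains to check that $\tilde\parf^{[k]}(v)=\tilde\parf^k(v)$, i.e.\ that $\tilde\parf^{k-1}(v)\notin\Troot(\tilde\Tree)$. This follows from Lemma~\ref{lem:thick2}\,\eqref{thic:roots}, which gives $\Troot(\tilde\Tree)\subseteq\Troot(\Tree)$: since $\tilde\parf^{k-1}(v)=\parf^{k-1}(v)\notin\Troot(\Tree)$, it is also not in $\Troot(\tilde\Tree)$. Combining, $\parf^{[k]}(v)=\parf^k(v)=\tilde\parf^k(v)=\tilde\parf^{[k]}(v)$, completing the case.

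No step looks like a real obstacle; the only mild subtlety is recognising that the strictly thinner case ($\parf^{k-1}(v)\in\Troot(\Tree)$) is what makes $\parf^{[k]}\ne\parf^k$ a subtler operation than merely iterating, and handling it separately rather than trying to force equality of iterates throughout.
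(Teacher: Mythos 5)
Your proof is correct. It takes a genuinely different route from the paper: the paper's argument is a one-liner at the level of children sets, combining the identity $\Chif_{\Tree^k}(v)=\Chit{k}{\Tree}(v)$ (cited from \cite[Lemma 2.13 (c)]{pikul1}) with the inclusion of $k$-th children sets from Lemma~\ref{lem:thick2}~\eqref{thic:childn} and the characterisation of thickness via children sets in Theorem~\ref{thm:thickness}~\eqref{thic:child1}. You instead verify $\parf^{[k]}(v)\in\{v,\tilde\parf^{[k]}(v)\}$ directly from the definition of the power forest, splitting on whether $\parf^{k-1}(v)\in\Troot(\Tree)$, and in the main case you use Lemma~\ref{lem:thin-anc} (with the second alternative excluded because roots are fixed points) together with the root inclusion of Lemma~\ref{lem:thick2}~\eqref{thic:roots}; all steps check out, including the needed observation that $\parf^{k-1}(v)\notin\Troot(\Tree)$ forces all earlier iterates to avoid $\Troot(\Tree)$. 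What the paper's route buys is brevity, given that the child-set machinery and the external lemma on $\Chif_{\Tree^k}$ are already available; what your route buys is that it stays entirely at the level of parent functions, avoids the citation to \cite{pikul1} about children of the power forest, and in fact yields the slightly stronger conclusion that $\parf^{[k]}(v)=\tilde\parf^{[k]}(v)$ whenever $\parf^{k-1}(v)\notin\Troot(\Tree)$, i.e.\ the two power forests differ only at vertices that become roots of $\Tree^k$.
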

\begin{proof}
It follows from the equality $\Chif_{\Tree^k}(v) = \Chit{k}{\Tree}(v)$
(\cite[Lemma 2.13 (c)]{pikul1}),
Lemma~\ref{lem:thick2}~\eqref{thic:childn} and
Theorem~\ref{thm:thickness}~\eqref{thic:child1}.
\end{proof}

According to \cite[Lemma 3.6]{pikul1}, the $k$-th power of a weighted shift on $\Tree$
is a~weighted shift on $\Tree^k$, hence the class of weighted shifts on directed forests
is closed on taking powers of the operator.
If $S_\blambda$ is a~weighted shift on an infinite directed tree, $S_\blambda^k$ is
a~weighted shift on a~directed forest consisting of at least $k$ trees
(see \cite[Lemma~2.14 (c)]{pikul1}).
In particular, for $k\geq 2$, it is no longer a~weighted shift on
a directed tree. The number of trees in the $k$-th power of a rooted directed tree
cannot be (easily) expressed without additional information. See
\cite[Corollary 2.16]{pikul1} for an exemplary estimation in one
particular case.

The next operation will be taking the greatest element among the leafless
directed forests bounded by $\Tree$, with respect to the order of thickness.
\begin{Thm}\label{thm:leaf-supp}
For every directed forest $\Tree$ there exists the
thickest element in the set of all leafless directed forests
thinner than $\Tree$.
\end{Thm}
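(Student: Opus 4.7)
The plan is to construct the desired thickest leafless subforest explicitly. Given $\Tree = (V, \parf)$, let
\[ V_\infty := \bigl\{v \in V : \exists\,\{v_n\}_{n=0}^\infty \subseteq V \text{ with } v_0 = v,\ \parf(v_{n+1}) = v_n \text{ and } v_{n+1}\neq v_n \text{ for all } n\in\ZP\bigr\} \]
be the set of vertices from which an infinite strictly descending chain emanates in $\Tree$, and put
\[ \parf_0(v) := \begin{cases} \parf(v) & \text{if } v \in V_\infty \cap V^\circ, \\ v & \text{otherwise.}\end{cases} \]
Then $\parf_0(v)\in\{v,\parf(v)\}$ for every $v$, so $\parf_0\treeleq\parf$, and Lemma~\ref{lem:thin-forest} gives that $\Tree_0 := (V, \parf_0)$ is a directed forest thinner than $\Tree$, which I would take as the candidate.

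To prove leaflessness of $\Tree_0$ via Corollary~\ref{cor:leafless}, I would first record the key fact that $V_\infty$ is closed under descent along any witnessing chain: if $v_0, v_1, v_2, \ldots$ witnesses $v_0 \in V_\infty$, then for every $n \geq 1$ the tail $v_n, v_{n+1}, \ldots$ witnesses $v_n \in V_\infty$, while $\parf(v_n) = v_{n-1} \neq v_n$ shows $v_n \in V^\circ$. Any non-root of $\Tree_0$ lies by construction in $V_\infty \cap V^\circ = V^\circ_{\Tree_0}$; by the closure property its witnessing chain stays in $V^\circ_{\Tree_0}$, and since $\parf_0$ agrees with $\parf$ there, the same chain furnishes the sequence required by Corollary~\ref{cor:leafless}.

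For maximality, let $\tilde\Tree = (V, \tilde\parf)$ be any leafless directed forest with $\tilde\Tree \treeleq \Tree$ and fix $v \in V$. The case $\parf_0(v) = \parf(v)$ is immediate from $\tilde\parf\treeleq\parf$, so assume $\parf_0(v) = v$. If $v\in\Troot(\Tree)$, then Lemma~\ref{lem:thick2}~\eqref{thic:roots} forces $\tilde\parf(v) = v$. Otherwise $v \in V^\circ \setminus V_\infty$; suppose for contradiction that $\tilde\parf(v) \neq v$. Then $v$ is a non-root of $\tilde\Tree$, so applying Corollary~\ref{cor:leafless} to $\tilde\Tree$ with $v_0 := v$ produces non-roots $\{v_n\}_{n=1}^\infty$ of $\tilde\Tree$ with $\tilde\parf(v_{n+1}) = v_n$, and combining $\tilde\parf(v_{n+1}) \neq v_{n+1}$ with $\tilde\parf \treeleq \parf$ yields $\parf(v_{n+1}) = v_n$; this chain witnesses $v \in V_\infty$, a contradiction. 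The main (though fairly mild) obstacle throughout is the closure property of $V_\infty$ under descent, which allows witnessing chains to be transported freely between $\Tree$ and $\Tree_0$; once it is in place, both parts reduce to direct applications of Corollary~\ref{cor:leafless}.
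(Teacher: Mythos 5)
Your proposal is correct, but it follows a genuinely different route from the paper. The paper never constructs the leafless support intrinsically: it considers the whole family $\mathcal F$ of parent functions $\parq$ for which $(V,\parq)$ is a leafless forest thinner than $\Tree$, and defines $\mathring\parf(v):=\parf(v)$ exactly when $\parq(v)\neq v$ for some $\parq\in\mathcal F$ (a pointwise supremum over the family). Lemma~\ref{lem:thin-forest} then gives that $(V,\mathring\parf)$ is a forest thinner than $\Tree$; maximality is immediate because any $\parq\in\mathcal F$ with $\parq(v)\neq v$ forces $\parq(v)=\parf(v)=\mathring\parf(v)$; and leaflessness follows since every non-root of $\mathring\Tree$ is a non-root of some leafless $(V,\parq)\in\mathcal F$, hence has a child there, and child sets only grow under thickening (Theorem~\ref{thm:thickness}~\eqref{thic:child1}). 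You instead build the support explicitly from the set $V_\infty$ of vertices admitting infinite strictly descending chains, which in effect proves the paper's subsequent description of the support (Lemma~\ref{lem:supp-root}) simultaneously with the existence statement; your key verifications --- closure of $V_\infty$ under passing to tails of witnessing chains, the identification of $V\setminus\Troot(\Tree_0)$ with $V_\infty\cap V^\circ$, and the transport of chains between $\Tree$ and an arbitrary leafless thinner forest via $\tilde\parf\treeleq\parf$ together with Corollary~\ref{cor:leafless} --- are all sound, including the point that non-rootedness forces the chain to be strictly descending. The trade-off: the paper's supremum argument is shorter and requires no chain analysis, while your construction yields the concrete description of $\mathring\Tree$ (and hence Lemma~\ref{lem:supp-root}) as a free byproduct rather than as a separate later lemma.
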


\begin{proof}
Denote by $\mathcal F$ the family of all functions $\parq:V\to V$
such that $(V,\parq)$ is a leafless directed forest thinner than
$\Tree=(V,\parf)$. Define the function $\mathring\parf:V\to V$ by
\[ \mathring{\parf}(v):=\begin{cases}
v &\text{ if } \parq(v)=v \text{ for every } \parq\in\mathcal F \\
\parf(v) & \text{ if } \parq(v)\neq v \text{ for some } \parq\in\mathcal F
\end{cases},\quad v\in V.\]
We claim that $\mathring{\Tree}:=(V,\mathring{\parf})$ is a directed
forest which is thinner than $\Tree$ and thicker than $(V,\parq)$
for every $\parq\in\mathcal F$.
Indeed, since obviously $\mathring{\parf}\treeleq\parf$, 
Lemma~\ref{lem:thin-forest} justifies that
$\mathring{\Tree}$ is a directed forest and
$\mathring{\Tree} \treeleq\Tree$.
Pick $\parq\in\mathcal F$. Then $(V,\parq)\treeleq \Tree$.
For $v\in V$ we have either $\parq(v)=v$ or $v\neq\parq(v)=\parf(v)$,
and consequently $\mathring\parf(v)=\parf(v)$.
In both cases this means that $\parq(v)\in \{v,\mathring{\parf}(v)\}$, hence
$\mathring{\Tree}$ is indeed thicker than $(V,\parq)$.

It remains to show that $\mathring\Tree$ is leafless.
Let $v\in V\setminus\Troot(\mathring{\Tree})$. By definition of $\mathring\parf$,
it implies that $\mathring{\parf}(v)=\parf(v)$ and there exists $\parq\in\mathcal F$
such that $\parq(v)\neq v$, or equivalently $v$ is not a root of $(V,\parq)$.
By leaflessness of $(V,\parq)$ and
\cite[Lemma~2.4 (f)]{pikul1} 
we get $\Chif_{(V,\parq)}(v)\neq \emptyset$.
Since $(V,\parq)\treeleq \mathring\Tree$,
using Theorem~\ref{thm:thickness}~\eqref{thic:child1} we conclude that also
$\Chif_{\mathring\Tree}(v)\neq \emptyset$, hence $v$ is not a leaf of
$\mathring\Tree$. Since no $v\in V\setminus\Troot(\mathring{\Tree})$ is a leaf,
we have proven the leaflessness of $\mathring{\Tree}$.
\end{proof}
\begin{figure}[hbt]
\includegraphics[scale=1]{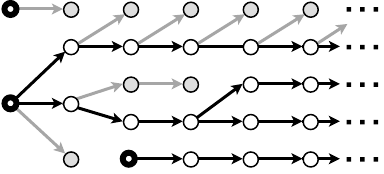}
\caption[An example of a leafless support]{A directed forest (all arrows) and
its leafless support (black arrows).}\label{fig:leaf-support}
\end{figure}
\begin{Def}[leafless support]\label{dfn:leaf-supp}
If $\Tree=(V,\parf)$ is a directed forest, then the thickest
leafless directed forest thinner than $\Tree$ will be denoted by
$\mathring{\Tree}=\bigl(V,\mathring{\parf}\bigr)$
and called the \emph{leafless support of $\Tree$} (see
Figure~\ref{fig:leaf-support} for an illustration).
\end{Def}

The above definition is correct due to Theorem~\ref{thm:leaf-supp}.
The leafless support can be described in the spirit of Corollary~\ref{cor:leafless}
as follows.

\begin{Lem}\label{lem:supp-root}
Let $\Tree=(V,\parf)$ be a directed forest.
Then $v_0\in V$ is not a root of $\mathring\Tree$
if and only if $v_0\notin\Troot(\Tree)$ and there exists a sequence
$\{v_n\}_{n=1}^\infty \subseteq V $
such that $\parf(v_{n+1})= v_n$ for $n\in\ZP$.
\end{Lem}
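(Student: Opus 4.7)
My plan is to handle the two implications separately, extracting the forward direction from the construction of $\mathring{\parf}$ inside the proof of Theorem~\ref{thm:leaf-supp}, and proving the converse by exhibiting an explicit leafless thinner forest that witnesses $v_0\notin\Troot(\mathring\Tree)$.

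For the forward direction, suppose $v_0$ is not a root of $\mathring\Tree$, i.e.\ $\mathring{\parf}(v_0)\neq v_0$. By the very definition of $\mathring{\parf}$ in the proof of Theorem~\ref{thm:leaf-supp}, this forces the existence of $\parq\in\mathcal F$ (a leafless directed forest thinner than $\Tree$) with $\parq(v_0)\neq v_0$. Since $\parq\treeleq\parf$, we have $\parq(v_0)\in\{v_0,\parf(v_0)\}$, so $\parf(v_0)=\parq(v_0)\neq v_0$, giving $v_0\notin\Troot(\Tree)$. Now apply Corollary~\ref{cor:leafless} to the leafless forest $(V,\parq)$ at $v_0$: it yields a sequence $\{v_n\}_{n=1}^\infty\subseteq V\setminus\Troot((V,\parq))$ with $\parq(v_{n+1})=v_n$. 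For each $n\in\ZP$, $v_{n+1}$ is not a root of $(V,\parq)$, so $\parq(v_{n+1})\neq v_{n+1}$, and $\parq\treeleq\parf$ forces $\parq(v_{n+1})=\parf(v_{n+1})$. Hence $\parf(v_{n+1})=v_n$ for every $n\in\ZP$.

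For the converse, given $v_0\notin\Troot(\Tree)$ and a sequence $\{v_n\}_{n=1}^\infty$ with $\parf(v_{n+1})=v_n$ for $n\in\ZP$, the idea is to construct $\parq\in\mathcal F$ with $\parq(v_0)\neq v_0$; by the definition of $\mathring\parf$ this will force $\mathring{\parf}(v_0)=\parf(v_0)\neq v_0$, so $v_0\notin\Troot(\mathring\Tree)$. Set $A:=\{v_n:n\in\ZP\}$ and define
\[
\parq(v):=\begin{cases}\parf(v) & \text{if } v\in A,\\ v & \text{if } v\notin A.\end{cases}
\]
Then $\parq\treeleq\parf$ by inspection, so $(V,\parq)$ is a directed forest by Lemma~\ref{lem:thin-forest}, and it is automatically thinner than $\Tree$.

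The main point to verify is that $(V,\parq)$ is leafless. The key preliminary observation is that \emph{no} $v_n$ lies in $\Troot(\Tree)$: if $v_m\in\Troot(\Tree)$ for some $m\geq 1$, then $v_{m-1}=\parf(v_m)=v_m\in\Troot(\Tree)$, and iterating this argument yields $v_0\in\Troot(\Tree)$, contradicting the hypothesis. Hence $\parf(v_n)\neq v_n$, and from $\parf(v_{n+1})=v_n$ we also get $v_{n+1}\neq v_n$ for every $n\in\ZP$. Consequently $\parq(v_{n+1})=\parf(v_{n+1})=v_n$ with $v_{n+1}\neq v_n$, so $v_{n+1}\in\Chif_{(V,\parq)}(v_n)$. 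This shows every vertex of $A$ has a child in $(V,\parq)$, while every vertex outside $A$ is a root of $(V,\parq)$; thus $(V,\parq)$ is leafless. Since $v_0\in A$ with $\parf(v_0)\neq v_0$, we have $\parq(v_0)\neq v_0$, which completes the argument. The only subtlety worth double-checking is the chain-of-roots step above, which uses the defining property \eqref{eq:fdef} only implicitly (through the relation $\parf(v_m)=v_m$ propagating backwards along the prescribed sequence).
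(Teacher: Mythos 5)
Your proof is correct and follows essentially the same route as the paper: the forward implication is Corollary~\ref{cor:leafless} applied to a leafless forest thinner than $\Tree$ (the paper uses $\mathring\Tree$ itself where you use a witness $\parq\in\mathcal F$), and the converse uses exactly the paper's construction of the thinner leafless forest supported on $\{v_n\}_{n\in\ZP}$. The only cosmetic difference is that you appeal to the explicit formula for $\mathring\parf$ from the proof of Theorem~\ref{thm:leaf-supp}, whereas the paper argues from the defining property of the leafless support together with Lemma~\ref{lem:thick2} and Theorem~\ref{thm:thickness}.
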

\begin{proof} Let $v_0\in V\setminus\Troot(\mathring\Tree)$. By Corollary~\ref{cor:leafless}, there exists a sequence
$\{v_n\}_{n=1}^\infty \subseteq V\setminus \Troot(\mathring\Tree)$ such that
$\mathring\parf(v_{n+1})=v_n$ for $n\in\ZP$. Since $\mathring\Tree\treeleq\Tree$,
$\mathring\parf(v_{n+1})\in\{v_{n+1},\parf(v_{n+1})\}$ but
since $v_{n+1}\notin\Troot(\mathring\Tree)$, we have
\[ \parf(v_{n+1})=\mathring\parf(v_{n+1})=v_n,\quad n\in\ZP.\]
It follows from Lemma~\ref{lem:thick2}~\eqref{thic:roots} that
$v_0\notin\Troot(\Tree)\subseteq \Troot(\mathring\Tree)$.

On the other hand, given $\{v_n\}_{n=0}^\infty \subseteq V$
such that $\parf(v_{n+1})=v_n$ for $n\in\ZP$ and $v_0\notin\Troot(\Tree)$,
we can construct a directed forest
$(V,\parq)$, where
\[ \parq(v)=\begin{cases}\parf(v) &\text{ if } v=v_n \text{ for some } n\in\ZP\\
v &\text{ if } v\neq v_n \text{ for all } n\in\ZP\end{cases}.\]
Note that $\parq(v_0)=\parf(v_0)\neq v_0$.
Using Lemma~\ref{lem:thin-forest} one can easily prove that $(V,\parq)$
is indeed a directed forest and $(V,\parq)\treeleq \Tree$. It is evident
that $V=\parq(V)$, which implies that the new forest is leafless,
and as such is thinner than $\mathring\Tree$. Then
\[ v_0\in\Chif_{(V,\parq)}(\parq(v_0))\subseteq \Chif_{\mathring\Tree}(\parf(v_0)),\]
and consequently $v_0$ is not a root of $\mathring\Tree$
(see \cite[Lemma~2.4 (d)]{pikul1}). 
\end{proof}

\begin{Cor}
If $v\in V\setminus\Troot(\mathring\Tree)$, then 
$\mathring\parf^n(v)=\parf^n(v)$ for all $n\in\ZP$. Hence, if two
vertices $v_1,v_2\in V\setminus\Troot(\mathring\Tree)$ belong to the
same tree of $\Tree$, then they are in the same tree of the leafless support.
In particular, assuming $\Tree$ is a directed tree we obtain
that $\mathring\Tree$ contains at most one non-degenerate tree.
\end{Cor}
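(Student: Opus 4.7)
The plan is to prove the three claims in order, each feeding the next.

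For the first claim, I would induct on $n$, with $n=0$ immediate. For the inductive step, set $w := \parf^n(v) = \mathring\parf^n(v)$ by the hypothesis; the goal becomes $\mathring\parf(w) = \parf(w)$. If $w \in \Troot(\Tree)$, then Lemma~\ref{lem:thick2}~\eqref{thic:roots} places $w$ in $\Troot(\mathring\Tree)$ as well, so both sides equal $w$. Otherwise, the strategy is to show $w \notin \Troot(\mathring\Tree)$; combined with $\mathring\parf \treeleq \parf$ and $\mathring\parf(w) \neq w$, this forces $\mathring\parf(w) = \parf(w)$. To apply the ``if'' direction of Lemma~\ref{lem:supp-root} to $w$, I would concatenate the finite chain $\parf^{n-1}(v), \ldots, \parf(v), v$ with the infinite descending sequence $\{v_k\}_{k=1}^\infty$ supplied by the same lemma applied to $v$; the resulting sequence has $w$ as predecessor under $\parf$ and satisfies $\parf(w_{k+1}) = w_k$ throughout, which together with the sub-case hypothesis $w \notin \Troot(\Tree)$ delivers $w \notin \Troot(\mathring\Tree)$.

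For the second claim, I would invoke the characterisation of trees via common $\parf$-ancestors (\cite[Lemma~2.5]{pikul1}): choose $m, n \in \ZP$ with $\parf^m(v_1) = \parf^n(v_2)$ and apply the first claim to both vertices to obtain $\mathring\parf^m(v_1) = \mathring\parf^n(v_2)$, so $v_1$ and $v_2$ share an ancestor in $\mathring\Tree$ and hence a tree.

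The third claim would follow by contradiction. If $\Tree$ were a directed tree and $\mathring\Tree$ contained two distinct non-degenerate trees $W_1, W_2$, then each $W_i$ would have at least two vertices and at most one root, allowing me to pick $v_i \in W_i \setminus \Troot(\mathring\Tree)$. Since $\Tree$ consists of a single tree, $v_1$ and $v_2$ would share a tree in $\Tree$; the second claim would then place them in one tree of $\mathring\Tree$, contradicting $W_1 \neq W_2$.

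The main obstacle is the inductive step of the first claim, where one must set up exactly the right concatenated sequence to feed Lemma~\ref{lem:supp-root} and verify the recursion $\parf(w_{k+1}) = w_k$ at every index (including the junction between the finite prefix and the infinite tail). Once this is done, the remaining two claims are short deductions.
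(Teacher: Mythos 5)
Your proof is correct, and it follows exactly the route the paper intends: the corollary is stated without proof as an immediate consequence of Lemma~\ref{lem:supp-root}, and your induction (splitting on whether $\parf^n(v)\in\Troot(\Tree)$, using Lemma~\ref{lem:thick2}~\eqref{thic:roots} in one case and feeding the concatenated ancestor-plus-descending chain into the ``if'' direction of Lemma~\ref{lem:supp-root} in the other) is precisely the argument being left to the reader. The only point worth noting is that the concatenation step is legitimate because Lemma~\ref{lem:supp-root} imposes no distinctness on the sequence, so your verification of $\parf(w_{k+1})=w_k$ at the junction is all that is needed.
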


A naive way to construct ``a leafless support'' would be to
inductively remove all edges leading to leaves. In other words,
we would preserve those children of a vertex which have descendants
in every generation, i.e.\ for every $k\geq1$,
$\Chif[k](v)\neq \emptyset$.
Below we give an example showing that such approach is not sufficient.

\begin{Przyk}\label{ex:no-support}
Consider the directed tree $\Tree=(V,\parf)$, where
\[ V:=\{(m,n)\in\N^2\colon m\geq n\}\cup\{(1,0)\}, \]
\[ \parf\bigl((m,n)\bigr):=\begin{cases}
(m,n-1) & \text{ for } n\geq 2\\
(1,1) & \text{ for } n=1,\ m\geq 2\\
(1,0) & \text{ for } m=1,\ n=0,1\end{cases},\quad (m,n)\in V.\]
See Figure~\ref{fig:no-support}.
Then $\Chif[k]((1,1))\neq \emptyset$ for all $k\in \N$. However,
by Lemma~\ref{lem:supp-root}, $\mathring\Tree=(V,\mathrm{id}_V)$, and in particular $\Chif_{\mathring\Tree}((1,0))=\emptyset$.
\end{Przyk}

\begin{figure}[htb]
\includegraphics[scale=1.1]{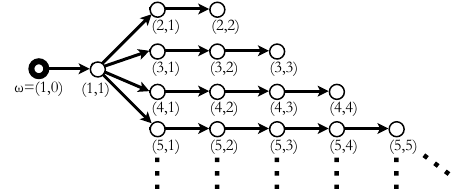}
\caption{The tree from Example~\ref{ex:no-support}.}
\label{fig:no-support}
\end{figure}

One can think of using a transfinite induction for removing
leaves, but we are not going to discuss it further.


\section{Hyponormality of powers} 
We recall that a bounded linear operator $T$ on a (complex) Hilbert space is said to be
\emph{hyponormal} if $[T^*, T]:=T^*T-TT^*\geq 0$. It is
\emph{power hyponormal} if $T^n$ is hyponormal for every $n\geq 1$.
The notion of hyponormality originates from the work of Halmos \cite{halmos}.
We refer the reader to \cite{Con} for the fundamentals of the theory of
hyponormal operators.
\cite[Theorem 5.1.2]{szifty} provides the characterisation of
hyponormality of weighted shifts on directed trees.
The generalisation to the case of arbitrary powers
of proper weighted shifts on directed forests is stated by \cite[Theorem~4.1]{pikul1}.
Characterisation of power hyponormality is summarised in \cite[Corollary~4.2]{pikul1}.

It is well known that for a classical weighted shift hyponormality implies power
hyponormality (it is equivalent to the fact that the sequence of weights is
monotonically increasing in the absolute value).
In \cite{pikul1} there was already proven a characterisation of all those
locally countable leafless directed forests on which every hyponormal
bounded  weighted shift is power hyponormal. These are precisely
the ``forkless forests''. We recall the definition below.

\begin{Def}[{\cite[Definition 4.3]{pikul1}}]\label{dfn:forkless-forest}
We call a directed forest $\Tree=(V,\parf)$ a \emph{forkless forest}
if for every $v\in V^\circ$ its degree equals one.
A forkless forest containing only one tree is called a
\emph{forkless tree} (see Figure~\ref{fig:forkless} for an example).
\end{Def}

\begin{figure}[htb]
\includegraphics[scale=1.15]{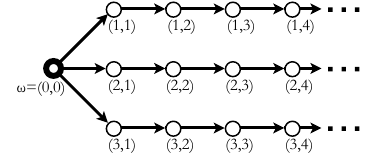}
\caption{A canonical example of a forkless tree.}
\label{fig:forkless}
\end{figure}

Countable forkless trees are characterised by \cite[Proposition 4.5]{pikul1}.
They are\footnote{Except for the rootless forkless tree
$(\mathbb Z,\parf_\mathbb Z)$ mentioned on page~\pageref{dfn:linear-tree}.}
often denoted by $\mathscr{T}_{n,0}$ in the literature
(cf.\ \cite[6.2.10]{szifty}).
Below we recall the previous result on directed forests, all hyponormal
weighted shifts on which are power hyponormal.

\begin{Thm}[{\cite[Theorem 4.8]{pikul1}}]\label{thm:only-stars}
Let $\Tree=(V,\parf)$ be a leafless locally countable directed forest.
Then \NWSR
\begin{abece}
\item every bounded hyponormal weighted shift on $\Tree$ is power hyponormal,
\item if $S_\blambda$ is a bounded hyponormal weighted shift on $\Tree$, then
$S_\blambda^2$ is hyponormal,
\item $\Tree$ is a forkless forest.
\end{abece}
Moreover, the conditions {\rm(a)} and {\rm(b)} remain equivalent
if we restrict to proper weighted shifts.
\end{Thm}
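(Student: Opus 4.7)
The plan is to establish (a)$\Rightarrow$(b)$\Rightarrow$(c)$\Rightarrow$(a). The implication (a)$\Rightarrow$(b) is immediate from the definition of power hyponormality.

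For (c)$\Rightarrow$(a), assume $\Tree$ is forkless and leafless. First, using the orthogonal-sum decomposition of weighted shifts on directed forests (\cite[Proposition~3.4, Lemma~3.5]{pikul1}) and the fact that a direct sum of operators is power hyponormal iff each summand is, I reduce to the case of a single tree. A forkless leafless tree must be one of: (i) a single degenerate root; (ii) a rootless chain isomorphic to $(\mathbb{Z},\parf_{\mathbb Z})$, so that $S_\blambda$ is a classical bilateral weighted shift; (iii) a rooted ``star'' whose root has some number of children, each generating an infinite chain. Cases (i)--(ii) are classical: hyponormality amounts to monotonicity of $\{|\lambda_n|\}$, which passes to every power. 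In case (iii) the hyponormality criterion of \cite[Theorem~5.1.2]{szifty} reduces to monotonicity along each chain plus one inequality at the root. Because $S_\blambda^k$ is by \cite[Lemma~3.6]{pikul1} a weighted shift on $\Tree^k$, and $\Tree^k$ is again a disjoint union of forkless leafless trees in which this monotonicity is inherited, hyponormality of $S_\blambda^k$ follows.

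For (b)$\Rightarrow$(c) I argue by contraposition: assuming $\Tree$ is not forkless, I would produce a proper hyponormal weighted shift whose square fails to be hyponormal. Pick $v_0\in V^\circ$ with $\deg(v_0)\geq 2$ and two distinct children $u_1,u_2\in\Chif(v_0)$; leaflessness together with Corollary~\ref{cor:leafless} yields infinite chains descending from each $u_i$. Following the template of \cite[Examples~5.3.2, 5.3.3]{szifty}, I would place a common positive value $a$ on all weights along the descendant chains under $u_1$ and $u_2$, choose $|\lambda_{u_1}|\neq|\lambda_{u_2}|$ tuned so that the hyponormality inequality at $v_0$ is exactly satisfied, and assign the remaining weights of $\Tree$ so that $S_\blambda$ is bounded and hyponormality holds at every other vertex (for instance, all equal to a small constant). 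One then verifies hyponormality of $S_\blambda$ vertex by vertex via \cite[Theorem~5.1.2]{szifty}, while the hyponormality inequality for $S_\blambda^2$ at $v_0$, now a vertex of $\Tree^2$, becomes a strictly stronger quadratic inequality mixing the two branches, and the asymmetric choice forces its violation.

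The main obstacle is this last step: a delicate algebraic verification that weights saturating the first-order inequality at $v_0$ strictly violate the second-order one. I expect this to reduce to a $2\times 2$ matrix positivity computation on the span of $e_{v_0}$ and its second-generation descendants under the commutator of $S_\blambda^2$, paralleling the cited examples of \cite{szifty}. The ``moreover'' clause about proper weighted shifts is then automatic, since the construction uses only strictly positive weights at non-root vertices.
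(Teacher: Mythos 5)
The crucial direction of your plan, the contrapositive of (b)$\Rightarrow$(c), does not work as you describe it. In your construction the chains below $u_1$ and $u_2$ carry one and the same constant weight $a$, and you saturate the first-order inequality at $v_0$, i.e.\ $|\lambda_{u_1}|^2+|\lambda_{u_2}|^2=a^2$. Then the inequality for $S_\blambda^2$ at $v_0$ reads $\frac{|\lambda_{u_1}|^2a^2}{a^4}+\frac{|\lambda_{u_2}|^2a^2}{a^4}=1$: it is saturated again, not violated, and the asymmetry $|\lambda_{u_1}|\neq|\lambda_{u_2}|$ is invisible because the two branches continue identically (the corresponding inequality at $\parf(v_0)$ also collapses back to the first-order one). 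In fact no choice of weights can make the square fail at the branching vertex itself in your setting: when the descendants of $u_1,u_2$ form chains with nondecreasing moduli, each term of the second-order inequality at $v_0$ is dominated by the corresponding first-order term, since $\|S_\blambda^2 e_{c_i}\|\geq|\lambda_{c_i}|^2$ for the child $c_i$ of $u_i$; so hyponormality of $S_\blambda$ already forces the inequality for $S_\blambda^2$ at $v_0$. This is precisely why a fork sitting at a root (the star trees) is harmless and why forklessness restricts only vertices of $V^\circ$.

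The counterexample must instead exploit the parent $p:=\parf(v_0)\neq v_0$ — which exists exactly because $v_0\in V^\circ$ — and must use genuinely different weight profiles on the two branches, not merely different weights at $u_1,u_2$. For instance, put the constant $\alpha$ with $0<\alpha^2<1/3$ on the whole chain below $u_1$, the constant $1$ below $u_2$, set $|\lambda_{u_1}|^2=\alpha^2/2$, $|\lambda_{u_2}|^2=1/2$, $|\lambda_{v_0}|^2=(1+\alpha^2)/2$, and weight zero elsewhere. All first-order inequalities hold (with equality at $v_0$ and at $p$), the shift is bounded, but the inequality for $S_\blambda^2$ at $p$ contains the term $|\lambda_{v_0}\lambda_{u_1}|^2/\alpha^4=(1+\alpha^2)/(4\alpha^2)>1$, so $S_\blambda^2$ is not hyponormal; this is the actual mechanism behind \cite[Examples 5.3.2 and 5.3.3]{szifty}. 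Note also that your finishing move ``give all remaining weights a small constant value'' secures neither boundedness (degrees may be unbounded) nor hyponormality (the quotients in the criterion of \cite[Theorem 5.1.2]{szifty} are scale-invariant), so the ``moreover'' clause about proper shifts is not automatic from your construction; the safe choice is weight zero off the selected subforest, giving a shift that is proper only on a thinner leafless forest (cf.\ Remark~\ref{rem:Tlambda-leq}), with extra damping needed if one insists on a proper shift on $\Tree$ itself. Your (a)$\Rightarrow$(b) and the outline of (c)$\Rightarrow$(a) (classification into degenerate vertices, $\mathbb Z$-chains and stars, and domination of the $k$-th order quotients by the first-order ones) are fine modulo routine care with zero weights; bear in mind, finally, that the present paper does not prove this theorem at all but quotes it from \cite[Theorem 4.8]{pikul1}.
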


Now we will generalise the result to cover the case of 
directed forests which are not leafless and locally countable.
First note that forkless forests have a neat property involving
the relation of thickness (cf.\ the proof of \cite[Lemma~4.7]{pikul1}).

\begin{Lem}\label{lem:thin-fork}
Every leafless directed forest thinner than a forkless forest is also forkless.
\end{Lem}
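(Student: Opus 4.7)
The plan is to verify the forkless condition pointwise. Fix a leafless directed forest $\Tree_1=(V,\parf_1)$ thinner than a forkless forest $\Tree_2=(V,\parf_2)$; for an arbitrary $v\in V^\circ_{\Tree_1}$, I would sandwich $\deg_{\Tree_1}(v)$ between $1$ and $1$.

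For the upper bound, Lemma~\ref{lem:thick2}~\eqref{thic:roots} yields $\Troot(\Tree_2)\subseteq\Troot(\Tree_1)$, so $v$ is also a non-root of $\Tree_2$; forklessness of $\Tree_2$ then forces $\deg_{\Tree_2}(v)=1$, and Theorem~\ref{thm:thickness}~\eqref{thic:child1} supplies the inclusion $\Chif_{\Tree_1}(v)\subseteq\Chif_{\Tree_2}(v)$, whence $\deg_{\Tree_1}(v)\leq 1$. For the lower bound, I would apply Corollary~\ref{cor:leafless} to $\Tree_1$ at the vertex $v$: this produces a sequence $\{v_n\}_{n=1}^\infty\subseteq V^\circ_{\Tree_1}$ with $\parf_1(v_{n+1})=v_n$ for all $n\in\ZP$. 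In particular $\parf_1(v_1)=v$, and $v_1\neq v$ because $v_1\in V^\circ_{\Tree_1}$ (so $\parf_1(v_1)\neq v_1$); hence $v_1\in\Chif_{\Tree_1}(v)$ and $\deg_{\Tree_1}(v)\geq 1$.

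Combining the two estimates gives $\deg_{\Tree_1}(v)=1$ for every $v\in V^\circ_{\Tree_1}$, which is precisely the defining property of a forkless forest. The argument is essentially a repackaging of three facts already in hand—containment of roots under thickness, containment of child sets under thickness, and the backward-chain characterisation of leaflessness—so there is no real obstacle; the only point warranting care is that the child witnessing $\deg_{\Tree_1}(v)\geq 1$ must be distinct from $v$ itself, which comes for free once it is drawn from a sequence contained in $V^\circ_{\Tree_1}$.
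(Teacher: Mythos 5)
Your proof is correct and follows essentially the same route as the paper: sandwich $\deg_{\Tree_1}(v)$ using the inclusion of child sets (Theorem~\ref{thm:thickness}~\eqref{thic:child1}), the inclusion of roots, forklessness of the thicker forest, and leaflessness of the thinner one. The only cosmetic difference is that you derive the lower bound $\deg_{\Tree_1}(v)\geq 1$ via Corollary~\ref{cor:leafless}, where the paper just cites leaflessness directly.
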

\begin{proof} Let $\Tree_2=(V,\parf_2)$ be a forkless forest and
$\Tree_1=(V,\parf_1)$ be a leafless directed forest thinner than $\Tree_2$.
Let $v\in V\setminus \Troot(\Tree_1)$. From the assumption of leaflessness
we get $1\leq \deg_{\Tree_1}(v)$. By Theorem~\ref{thm:thickness}~\eqref{thic:child1}
we see that $\deg_{\Tree_1}(v) \leq \deg_{\Tree_2}(v)$. It follows from
Theorem~\ref{thm:thickness}~\eqref{thic:roots} that $v\notin\Troot(\Tree_2)$,
hence by forklessness $\deg_{\Tree_2}(v)=1$. Combining these inequalities
we obtain $\deg_{\Tree_1}(v)= 1$. This proves that $\Tree_1$ is forkless.
\end{proof}

Using the notion of leafless support (see Definition~\ref{dfn:leaf-supp}),
we can formulate the complete characterisation of directed forests on
which all hyponormal weighted shifts are power hyponormal.

\begin{Thm}\label{thm:star-support}
Let $\Tree=(V,\parf)$ be a directed forest. Then \NWSR
\begin{abece}
\item every bounded hyponormal weighted shift on $\Tree$
is power hyponormal,
\item square of any bounded hyponormal weighted shift on $\Tree$
is hyponormal
\item the leafless support of $\Tree$ is forkless,
\item every leafless forest thinner than $\Tree$ is forkless.
\end{abece}
\end{Thm}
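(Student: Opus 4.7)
I would establish the chain (a)$\Rightarrow$(b), (c)$\Leftrightarrow$(d), (b)$\Rightarrow$(c), (d)$\Rightarrow$(a). The implication (a)$\Rightarrow$(b) is immediate. For (c)$\Leftrightarrow$(d), note that $\mathring\Tree$ is itself a leafless forest thinner than $\Tree$, so (d) applied to it yields (c); conversely, every leafless forest thinner than $\Tree$ is, by Definition~\ref{dfn:leaf-supp}, also thinner than $\mathring\Tree$, and when the latter is forkless Lemma~\ref{lem:thin-fork} forces the former to be forkless as well.

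For (b)$\Rightarrow$(c) I would proceed by contraposition. Assuming $\mathring\Tree$ is not forkless, pick a fork vertex $v_0$ together with two distinct children $c_1,c_2\in\Chif_{\mathring\Tree}(v_0)$, and use Lemma~\ref{lem:supp-root} to obtain, in $\Tree$, two infinite descending chains issuing from $c_1$ and $c_2$; these chains (together with $v_0$) are pairwise disjoint because they originate from distinct children of the same vertex. Retaining only the parent assignments of this fork-with-chains configuration and turning every other vertex of $V$ into a degenerate root yields, via Lemma~\ref{lem:thin-forest}, a leafless locally countable directed forest $\Tree_0\treeleq\Tree$ whose unique non-degenerate tree has a fork at $v_0$. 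Since $\Tree_0$ is not forkless, Theorem~\ref{thm:only-stars} supplies a bounded hyponormal weighted shift $S_{\blambda_0}$ on $\Tree_0$ with $S_{\blambda_0}^2$ not hyponormal; Lemma~\ref{lem:thicklambda} then certifies that $S_{\blambda_0}$ acts as a weighted shift on $\Tree$ with identical operator-theoretic properties, contradicting (b).

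The main obstacle is (d)$\Rightarrow$(a). Let $S_\blambda$ be bounded hyponormal on $\Tree$. By Remark~\ref{rem:Tlambda-leq} it is a proper weighted shift on some $\Tree_\blambda=(V,\parf_\blambda)\treeleq\Tree$. The crucial structural claim, which I would verify by a direct computation, is that hyponormality of a proper weighted shift forbids non-root leaves of $\Tree_\blambda$: if $w$ were a leaf with $w\notin\Troot(\Tree_\blambda)$, properness would give $\lambda_w\neq 0$, whence
\[
\|S_\blambda^* e_w\|^2=|\lambda_w|^2>0=\|S_\blambda e_w\|^2,
\]
contradicting $[S_\blambda^*,S_\blambda]\geq 0$. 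Hence every non-degenerate tree of $\Tree_\blambda$ is leafless.

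Keeping the parent function $\parf_\blambda$ on vertices of the non-degenerate trees of $\Tree_\blambda$ and making every other vertex a degenerate root produces a leafless directed forest thinner than $\Tree$, which by~(d) must be forkless. Thus every non-degenerate tree of $\Tree_\blambda$ is a forkless leafless chain (unilateral or bilateral). Decomposing $\eld{V}=\bigoplus_T \eld{T}$ along the trees $T$ of $\Tree_\blambda$ (these subspaces being reducing for $S_\blambda$) expresses $S_\blambda$ as an orthogonal direct sum of the zero operator (on degenerate trees) and classical unilateral/bilateral weighted shifts (on the remaining ones). Since a hyponormal classical weighted shift has monotonically non-decreasing absolute weights and is therefore automatically power hyponormal, and power hyponormality passes to orthogonal sums, $S_\blambda$ itself is power hyponormal. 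The delicate point is the ``no non-root leaves'' observation; once it is in hand, (d) reduces the problem entirely to the classical theory, so no appeal to the general power-hyponormality machinery of \cite[Theorem~4.1]{pikul1} is needed.
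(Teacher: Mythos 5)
Your cycle (a)$\Rightarrow$(b)$\Rightarrow$(c)$\Leftrightarrow$(d)$\Rightarrow$(a) matches the paper's, and the first three links are essentially the paper's arguments: your thinning construction for (b)$\Rightarrow$(c) is a minor variant of the paper's set $W$ (one caution: you must explicitly retain the edge from the fork vertex $v_0$ to its parent, because if $v_0$ became a root of the thinned forest its two children would no longer violate forklessness -- Definition~\ref{dfn:forkless-forest} restricts only vertices of $V^\circ$); and your direct computation showing that hyponormality of a proper shift forces leaflessness of $\Tree_\blambda$ is a legitimate replacement for the appeal to \cite[Theorem~4.1]{pikul1} at that point.

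The genuine gap is in (d)$\Rightarrow$(a), in the sentence ``every non-degenerate tree of $\Tree_\blambda$ is a forkless leafless chain (unilateral or bilateral)''. Forklessness puts no constraint on the degree of roots, so a forkless leafless tree may be a ``star'' $\mathscr{T}_{\eta,0}$: a root $\omega$ with $\eta\geq 2$ infinite branches -- this is precisely the canonical example in Figure~\ref{fig:forkless}, and such components do occur (e.g.\ an isometric weighted shift on $\mathscr{T}_{2,0}$, with $\sum_{u\in\Chif(v)}|\lambda_u|^2=1$ for all $v$, is hyponormal). On such a tree $S_\blambda$ is not a classical weighted shift, nor in general an orthogonal sum of classical ones: each branch spans an $S_\blambda$-invariant subspace, but it is not reducing, since $S_\blambda^* e_{u}=\overline{\lambda_{u}}\,e_{\omega}$ for every child $u$ of the root, so $S_\blambda$ does not split along the branches. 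Hence the ``monotone weights'' argument from the classical theory does not apply, and the statement ``hyponormal $\Rightarrow$ power hyponormal on forkless forests'' is exactly the nontrivial content of \cite[Lemma~4.7]{pikul1} (equivalently, the implication (c)$\Rightarrow$(a) of Theorem~\ref{thm:only-stars}), which the paper invokes at this step. Contrary to your closing remark, you cannot dispense with that machinery by reducing to classical shifts; you would have to either cite it, as the paper does, or reprove the star case $\mathscr{T}_{\eta,0}$ separately. With that repair, the rest of your argument stands.
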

\begin{proof}
Clearly (a)$\Rightarrow$(b).

\pImp{b}{c} Assume, to the contrary, that the leafless support
$\mathring{\Tree}$ is not forkless. We will show that there exists
a non-forkless locally countable leafless directed forest which is thinner
than $\mathring{\Tree}$ and hence $\Tree$.
According to Theorem~\ref{thm:only-stars} and Lemma~\ref{lem:thicklambda}
this would mean that there exists a bounded
hyponormal weighted shift on $\Tree$ with a non-hyponormal square.

To construct such a forest, pick $u,v_1,w_1\in V$ such that
$v_1,w_1\in\Chif_{\mathring{\Tree}}(u)$, $v_1\neq w_1$ and
$\mathring{\parf}(u)\neq u$ ($u$ is the vertex violating  the
forklessness of $\mathring{\Tree}$).
By Corollary~\ref{cor:leafless} applied to $\mathring{\Tree}$ there exist
sequences $\{v_n\}_{n=2}^\infty$ and $\{w_n\}_{n=2}^\infty$ such that
$\mathring{\parf}(v_{n+1})=v_n$ and $\mathring{\parf}(w_{n+1})=w_n$
for $n\in\N$. Then we define a new parent function
\[\parf'(v):=\begin{cases} \mathring{\parf}(v) & \text{ if }v\in W\\
v & \text{ if }v\notin W\end{cases},\quad v\in V,\]
where $W:= \{\mathring\parf^n(u)\colon n\in\ZP\}
\cup\{v_n:n\in\N\}\cup\{w_n:n\in\N\}$.
Then, using Lemma~\ref{lem:thin-forest}, we obtain that
$\Tree'=(V,\parf')$ is indeed a directed forest
($\parf'\treeleq\mathring{\parf}\treeleq\parf$ is evident).
It is immediate from the definition that it is thinner than
$\mathring{\Tree}$.
Note that $\Tree'$ consists exclusively of degenerate trees and
the tree $W$, hence it is locally countable. By the construction of $W$
and $\parf'$ it is leafless ($\parf'(V)=V$). It is not forkless,
since $u\notin\Troot(\Tree')$ and $\Chif_{\Tree'}(u)=\{v_1,w_1\}$.

\pImp{c}{d} follows from Lemma~\ref{lem:thin-fork} and definition of the leafless support.

\pImp{d}{a} Let $S_\blambda$ be a bounded hyponormal weighted shift on $\Tree$.
According to Remark~\ref{rem:Tlambda-leq}
, $S_\blambda$ can be regarded as
a proper weighted shift on $\Tree_\blambda$ and $\Tree_\blambda\treeleq\Tree$.
From \cite[Theorem~4.1]{pikul1} we deduce the leaflessness of $\Tree_\blambda$.
By (d), $\Tree_\blambda$ is forkless.
Finally, \cite[Lemma 4.7]{pikul1} (cf.\ Theorem~\ref{thm:only-stars})
implies power hyponormality of $S_\blambda$.
\end{proof}

\subsection*{Acknowledgements}
Results presented in this paper are mostly part of my PhD dissertation
written under supervision of prof.\ Jan Stochel, for whose guidance
I am very grateful. I would like to thank the reviewers whose
remarks and suggestions had significant impact on the current
shape of the article.


\end{document}